\newcounter{alphthm}
\theoremstyle{plain}
\newtheorem{theorem}{Theorem}[section]
\newtheorem{lemma}[theorem]{Lemma}
\newtheorem{proposition}[theorem]{Proposition}
\newtheorem{cor}[theorem]{Corollary}
\theoremstyle{definition}
\newtheorem{defn}[theorem]{Definition}
\newtheorem{rem}[theorem]{Remark}
\newcommand{\be}{\begin{equation}}
\newcommand{\ee}{\end{equation}}
\newcommand{\ben}{\begin{enumerate}}
\newcommand{\een}{\end{enumerate}}
\begin{document}
\title{Almost Paracontact Finsler Structures\\ on Vector Bundle}
\author{E. Peyghan, A. Tayebi and E. Sharahi}
\maketitle

\maketitle
\begin{abstract}
In this paper,  we define almost paracontact and normal almost paracontact Finsler structures on a vector bundle  and find some conditions for integrability of these structures. We define paracontact metric, para-Sasakian  and K-paracontact Finsler structures and study some properties of these structures.  For a K-paracontact Finsler structure, we find the  vertical and horizontal flag curvatures. Then, we define vertical $\phi$-flag curvature and prove that every locally symmetric para-Sasakian Finsler structure has negative vertical $\phi$-flag curvature. Finally, we define the horizontal and vertical Ricci tensors of a  para-Sasakian Finsler structure and study some curvature properties of  them.\footnote{ 2010 Mathematics subject Classification: 53D15, 53C05.}
\end{abstract}

\section{Introduction}
  The contact geometry has a very deep relation with physical concepts. At first
this geometry introduced by Sophus Lie in his works on PDEs. Contact theory
is in contrast with foliation theory. In the foliation theory, when one study a
distribution, it is very important for the distribution to be integrable. But on the
other hand in contact theory, we are interested in study on a distribution so that
it does nowhere integrable (even locally). Although this does not occur for any
one-dimensional distribution, but in upper-dimensional distributions we can
find such structures that their vector fields are not tangent to any submanifold of
the main manifold.

The notion of paracontact structure is alongside with contact
structure. This kind of structures first introduced by S\`{a}to \cite{S}. Then Sasaki studied some  interesting concepts of these structures  as he
had done for contact  structures \cite{Sa}\cite{Sa1}. Recently, many mathematician such as  Bejan, Calvaruso, Dru\c{t}\v{a}, Ivanov, Kaneyuki,  Montano and Zamakovo studied interesting properties of these structures \cite{BD, C, IVZ, KW, M, MEM, Z, Z1}.

The notion of vector bundle is one of  important geometric objects that have interesting applications in physics \cite{VV, V}. The importance of vector bundles caused to definition of similar structures with almost contact (paracontact) structures on vector bundle by  Sinha, Prasad and Yadav \cite{SP, SY}. In \cite{SP}, they introduced an almost paracontact Finsler structure $(\phi, \eta, \xi)$ on a vector bundle $E$ and consider the  condition  $G(\phi X, \phi Y)=G(X, Y)-\eta(X)\eta(Y)$ to compatibility of this structure with respect to the metric $G$ on $E$. It is remarkable that, an almost paracontact structure on a manifold $N$ is a set $(\phi,\xi,\eta)$ where $\phi$ is a
tensor field of type (1,1), $\xi$ a vector field and $\eta$ an 1-form such that $\eta(\xi)=1$, $\phi(\xi)=0$, $\eta\circ\phi=0$ and $\phi^2=I-\eta\otimes\xi$, where $I$ denotes the Kronecker tensor field.

In this paper,  we define almost paracontact Finsler structures and normal almost paracontact Finsler structures on vector bundle $E$ and introduce some conditions for integrability (normality) of these structures. We give some equivalent conditions for normality of an almost paracontact Finsler structure.

Then by using a pseudo-metric $G$ on $E$, similar to \cite{Z}, we consider the following condition to compatibility of this structure
\[
G(\phi X, \phi Y)=-G(X, Y)+\eta(X)\eta(Y).
\]
We define paracontact metric Finsler structure, para-Sasakian Finsler structure and K-paracontact Finsler structure. We find some conditions under which a Paracontact metric Finsler structure is a K-paracontact structure. Then we get  some conditions under which a paracontact metric Finsler structure on vector bundle $E$ reduces to a K-paracontact Finsler
structure. For a K-paracontact Finsler structure  on vector bundle $E$, we find the the vertical and horizontal flag curvatures. We define vertical $\phi$-flag curvature and prove that every locally symmetric para-Sasakian Finsler structure has vertical $\phi$-flag curvature $-\frac{1}{4}$.

Finally, we define the horizontal and vertical Ricci tensors of a  para-Sasakian
Finsler manifold and study some curvature properties of  them.

%------------------------------------------------------------------------------------------------------------------------------------------------------------------------------
\section{Preliminaries}
Let $E(M)=(E, \pi, M)$ be a vector bundle with the $(n+m)$-dimensional total space $E$, $n$-dimensional base space $M$ and the projection map $\pi$, such that $\pi: E\rightarrow M$, $u\in E\rightarrow\pi(u)=x\in M$ where $u=(x, y)$ and $y=\pi^{-1}(x)$ is the fibre of $E(M)$ over $x$. We denote by $V_uE$ the local fibre of the vertical bundle $VE$ at $u\in E$ and by $H_uE$ the complementary space of $V_uE$ in the tangent space $T_uE$ at $u$ to the total space $E$. Thus we have
\begin{equation}\label{decom}
T_uE=H_uE\oplus V_uE.
\end{equation}
A nonlinear connection $N$ on the total space $E$ of $E(M)$ is a differentiable distribution $H: E\rightarrow T_uE$, $u\in E\rightarrow H_u\subset T_uE$ with the property (\ref{decom}) (see \cite{MHSS}).

We denote by $(x^i, y^a)$, $i=1,\ldots, n$, $a=1,\ldots, m$,  the canonical coordinates of a point $u\in E$. Then $\{\frac{\partial}{\partial x^i}, \frac{\partial}{\partial y^a}\}$ is the natural basis and $\{dx^i, dy^a\}$ is it's dual basis on $E$. It is easy to see that $\{\frac{\delta}{\delta x^i}, \frac{\partial}{\partial y^a}\}$ is the basis on $E$ adapted to decomposition (\ref{decom}) and $\{dx^i, \delta y^a\}$ is the dual basis (cobasis) of it, where
\begin{equation}\label{adapt}
\frac{\delta}{\delta x^i}=\frac{\partial}{\partial x^i}-N^a_i\frac{\partial}{\partial y^a},\ \ \ \delta y^a=dy^a+N^a_idx^i,
\end{equation}
and $N^a_i$ are the coefficients of a nonlinear connection $N$. Now, we consider the horizontal and the vertical projectors $h$ and $v$ of the
nonlinear connection, which are determined by the direct decomposition (\ref{decom}).
These projectors can be expressed with respect to the adapted basis as follows:
\begin{equation}
h=\frac{\delta}{\delta x^i}\otimes dx^i,\ \ \ v=\frac{\partial}{\partial y^a}\otimes\delta y^a.
\end{equation}
Using the above projectors, any vector field $X$ on $E$ can be uniquely written as $X=hX + vX$. In the following, we adopt the notations
\[
hX=X^H,\ \ \ vX=X^V
\]
and we say $X^H$ and $X^V$ are horizontal and vertical components of $X$. Thus, any
vector field $X$ on $E$ can be uniquely written in the form
\begin{equation}
X=X^H+X^V.
\end{equation}
In the adapted basis, we have $X=X^i(x, y)\frac{\delta}{\delta x^i}+\bar{X}^a(x, y)\frac{\partial}{\partial y^a}$ and
\begin{equation}\label{d1}
X^H=X^i(x, y)\frac{\delta}{\delta x^i},\ \ \ \ X^V=\bar{X}^a(x, y)\frac{\partial}{\partial y^a}.
\end{equation}
Now, let $\omega$ be a 1-form on $E$. Then it can be uniquely written as $\omega=\omega^H+\omega^V$. In the adapted basis,  we have $\omega=\omega_i(x, y)dx^i+\bar{\omega}_a(x, y)\delta y^a$ and
\begin{equation}\label{d2}
\omega^H=\omega_i(x, y)dx^i,\ \ \ \omega^V=\bar{\omega}_a(x, y)\delta y^a.
\end{equation}
A tensor field $T$ on the vector bundle $E$ is called \textit{distinguished tensor field (briefly, a d-tensor)} of type $\left(
\begin{array}{cc}
p&r\\
q&s
\end{array}
\right)$
if it has the following property
\begin{align*}
T&(\omega_{i_1},\ldots,\omega_{i_p}, \omega_{a_1},\ldots,\omega_{a_r},X_{j_1},\ldots, X_{j_q}, X_{b_1},\ldots, X_{b_s})\\
&=T({\omega^H_{i_1}},\ldots,{\omega^H_{i_p}}, {\omega^V_{a_1}},\ldots,{\omega^V_{a_r}},{X^H_{j_1}},\ldots, {X^H_{j_q}}, {X^V_{b_1}},\ldots, {X^V_{b_s}}),
\end{align*}
where $\omega_{i_k}$, $\omega_{a_l}$, ($k=1,\ldots,p$, $l=1,\ldots,r$) are 1-forms on $E$ and $X_{j_v}$, $X_{b_w}$, ($v=1,\ldots, q$, $w=1,\ldots, s$) are vector fields on $E$. For instance, the components $X^H$ and $X^V$ from (\ref{d1}) of a vector field $X$ are d-vector fields. Also the components $\omega^H$ and $\omega^V$ of an 1-form $\omega$, from (\ref{d2}) are
d-1-form fields. In the adapted basis $\{\frac{\delta}{\delta x^i}, \frac{\partial}{\partial y^a}\}$ and adapted cobasis $\{dx^i, \delta y^a\}$, $T$ is expressed by
\begin{align*}
T&=T^{i_1,\ldots,i_p,a_1,\ldots,a_r}_{j_1,\ldots,j_q,b_1,\ldots,b_s}\frac{\delta}{\delta x^{i_1}}\otimes\ldots\otimes\frac{\delta}{\delta x^{i_p}}\otimes\frac{\partial}{\partial y^{a_1}}\otimes\ldots\otimes\frac{\partial}{\partial y^{a_r}}\\
&\ \ \ \otimes dx^{j_1}\otimes\ldots\otimes dx^{j_q}\otimes\delta y^{b_1}\otimes\ldots\otimes\delta y^{b_s}.
\end{align*}

\bigskip

A linear connection $D$ on $E$ is called a \textit{distinguished connection (briefly, d-connection)} if it preserves by parallelism the horizontal distribution, that is $D h=0$. Since $Id=h+v$, then  $D h=0$ implies that $D v=0$. Thus a d-connection preserves by the parallelism the vertical distribution. Therefore, we can write
\begin{align*}
D_XY&=(D_XY^H)^H+(D_XY^V)^V,\\
D_X\omega&=(D_X\omega^H)^H+(D_X\omega^V)^V,
\end{align*}
where $X, Y$ are vector fields on $E$ and $\omega$ is a 1-form on $E$.

A d-connection, with respect to the adapted basis, has the following form
\begin{equation}
\left\{
\begin{array}{cc}
D_{\frac{\delta}{\delta x^i}}\frac{\delta}{\delta x^j}=F^k_{ij}\frac{\delta}{\delta x^k},&D_{\frac{\delta}{\delta x^i}}\frac{\partial}{\partial y^b}=\bar{F}^c_{ib}\frac{\partial}{\partial y^c},\\
D_{\frac{\partial}{\partial y^a}}\frac{\delta}{\delta x^j}=C^k_{aj}\frac{\delta}{\delta x^k},&D_{\frac{\partial}{\partial y^a}}\frac{\partial}{\partial y^b}=\bar{C}^c_{ab}\frac{\partial}{\partial y^c}.
\end{array}
\right.
\end{equation}
For this connection, there is an associated pair of operators in the algebra of d-tensor fields. For any vector field $X$ on $E$, set
\[
D^H_XY=D_{X^H}Y,\ \ D^V_XY=D_{X^V}Y \ \ D^H_Xf=X^H(f),\ \ D^V_Xf=X^V(f),
\]
where $Y$ is a vector field and $f$ is a smooth function on $E$. We call $D^H$ ($D^V$) the operator of $h$-covariant ($v$-covariant) derivation. If $\omega$ is a 1-form on $E$, we define
\begin{align*}
(D^H_X\omega)Y&=X^H(\omega(Y))-\omega(D^H_XY),\\
(D^V_X\omega)Y&=X^V(\omega(Y))-\omega(D^V_XY),
\end{align*}
for any vector fields $X, Y$ on $E$.

Now, we consider the pseudo-metric structure $G$ on $E$ which is symmetric and non-degenerate, as $G=G^H+G^V$,  where $G^H(X, Y)=G(X^H, Y^H)$ is of type
$\left(
\begin{array}{cc}
0&0\\
2&0
\end{array}
\right)
$, symmetric and non-degenerate on $H_uE$ and $G^V(X, Y)=G(X^V, Y^V)$ is of type $
\left(
\begin{array}{cc}
0&0\\
0&2
\end{array}
\right)
$, symmetric and non-degenerate on $V_uE$. In the adapted basis,  we can write
\[
G=g_{ij}(x, y)dx^i\otimes dx^j+h_{ab}(x, y)\delta y^i\otimes\delta y^j.
\]

A d-connection $D$ on $E$ is called a \textit{metrical d-connection} with respect to $G$ if $D_XG=0$ holds for every vector field $X$ on $E$.

For a d-connection $D$, we consider the torsion $T$ defined by
\[
T(X, Y)=D_XY-D_YX-[X, Y],\ \ \ \forall X, Y\in\chi(E),
\]
where $\chi(E)$ is the set of all vector fields on $E$. The torsion of a d-connection $D$ on $E$ is completely
determined by the following five tensor fields:
\begin{align*}
T^H(X^H, Y^H)&=D^H_XY^H-D^H_YX^H-[X^H, Y^H]^H,\\
T^V(X^H, Y^H)&=-[X^H, Y^H]^V,\\
T^H(X^H, Y^V)&=-D^V_YX^H-[X^H, Y^V]^H,\\
T^V(X^H, Y^V)&=D^H_XY^V-[X^H, Y^V]^V,\\
T^V(X^V, Y^V)&=D^V_XY^V-D^V_YX^V-[X^V, Y^V]^V,
\end{align*}
which are called $(h)h$-torsion, $(v)h$-torsion, $(h)hv$-torsion, $(v)hv$-torsion and $(v)v$-torsion, respectively.
A d-connection $D$ is said to be \textit{symmetric} if the $(h)h$-torsion and $(v)v$-torsion vanish. In this paper, we use symmetric metrical d-connection and we call it \textit{Finsler connection}.
It is easy to see that the following relations hold for a Finsler connection:
\begin{eqnarray}
\nonumber2G(D_X^HY^H\!\!\!\!\!\!\!\!&,&\!\!\!\!\!\!\!\! \ Z^H)=X^HG(Y^H, Z^H)+Y^HG(X^H, Z^H)-Z^HG(X^H,Y^H)\\
\!\!\!\!&+&\!\!\!\!G([X^H, Y^H], Z^H)-G([X^H, Z^H], Y^H)-G([Y^H, Z^H], X^H),\label{l2}\\
\nonumber2G(D_X^VY^V\!\!\!\!\!\!\!\!&,&\!\!\!\!\!\!\!\! \ Z^V)=X^VG(Y^V, Z^V)+Y^VG(X^V, Z^V)-Z^VG(X^V,Y^V)\\
\!\!\!\!&+&\!\!\!\! G([X^V, Y^V], Z^V)-G([X^V, Z^V], Y^V)-G([Y^V, Z^V], X^V)\label{l20}.
\end{eqnarray}
Finally, we consider the curvature of a Finsler connection $D$ as following
\[
R(X, Y)Z=D_XD_YZ-D_YD_XZ-D_{[X, Y]}Z,\ \ \ \forall X, Y, Z\in\chi(E).
\]
As $D$ preserves by parallelism, on the horizontal and the vertical distributions,
from the above equation, we see that the operator $R(X, Y)$ carries horizontal vector fields
into horizontal vector fields and vertical vector fields into verticals. Consequently,
we have the following
\[
R(X, Y)Z=(R(X, Y)Z^H)^H+(R(X, Y)Z^V)^V\ \ \ \forall X, Y, Z\in\chi(E).
\]
Since $R(X, Y)$ is skew symmetric with
respect to $X$ and $Y$, then the curvature of a Finsler connection $D$ on $E$ is completely determined by the following six tensor fields
\begin{equation}\label{Rie}
\left\{
\begin{array}{cc}
R(X^H, Y^H)Z^H=D^H_XD^H_YZ^H-D^H_YD^H_XZ^H-D_{[X^H, Y^H]}Z^H,&\\
R(X^H, Y^H)Z^V=D^H_XD^H_YZ^V-D^H_YD^H_XZ^V-D_{[X^H, Y^H]}Z^V,&\\
R(X^V, Y^H)Z^H=D^V_XD^H_YZ^H-D^H_YD^V_XZ^H-D_{[X^V, Y^H]}Z^H,&\\
R(X^V, Y^H)Z^V=D^V_XD^H_YZ^V-D^H_YD^V_XZ^V-D_{[X^V, Y^H]}Z^V,&\\
R(X^V, Y^V)Z^H=D^V_XD^V_YZ^H-D^V_YD^V_XZ^H-D_{[X^V, Y^V]}Z^H,&\\
R(X^V, Y^V)Z^V=D^V_XD^V_YZ^V-D^V_YD^V_XZ^V-D_{[X^V, Y^V]}Z^V.
\end{array}
\right.
\end{equation}
We call the first and the sixth equation of (\ref{Rie}) as \textit{horizontal curvature} and \textit{vertical curvature} of $D$.
%--------------------------------------------------------------------------------------------
%----------------------------------------------------------------------------------------------------------
\section{Almost Praracontact Finsler Structure}
We consider tensor field $\phi$, 1-form $\eta$ and vector field $\xi$ on $E$, as follows
\begin{align}
\phi&=\phi^i_j(x, y)\frac{\delta}{\delta x^i}\otimes dx^j+\bar{\phi}^a_b(x, y)\frac{\partial}{\partial y^a}\otimes\delta y^a,\label{pa1}\\
\eta&=\eta_i(x, y)dx^i+\bar{\eta}_a(x, y)\delta y^a,\ \ \ \xi=\xi^i(x, y)\frac{\delta}{\delta x^i}+\bar{\xi}^a(x, y)\frac{\partial}{\partial y^a}\label{pa2}.
\end{align}
\begin{defn}
\emph{Suppose that $\phi$, $\eta$ and $\xi$ are given by (\ref{pa1}) and (\ref{pa2}) on $E$ such that
\begin{equation}
\phi^2=I-\eta^H\otimes\xi^H-\eta^V\otimes\xi^V,\ \ \eta^H(\xi^H)=\eta^V(\xi^V)=1,\label{C1}
\end{equation}
where
\[
\eta^H=\eta_i(x, y)dx^i, \ \ \eta^V=\bar{\eta}_a(x, y)\delta y^a,\ \ \xi^H=\xi^i(x, y)\frac{\delta}{\delta x^i},\ \ \xi^V=\bar{\xi}^a(x, y)\frac{\partial}{\partial y^a}.
\]
Then $(\phi, \eta, \xi)$ is called an almost paracontact Finsler structure on $E$ and $E$ is called an almost paracontact Finsler vector bundle.}
\end{defn}

\bigskip

Now, we are going to consider some properties of an almost paracontact Finsler structure. First, we prove the following.

\begin{theorem}\label{feri1}
Suppose that $E$ has an almost paracontact Finsler structure, then the following hold
\begin{equation}
\phi(\xi^H)=\phi(\xi^V)=0, \ \ \eta^H\circ\phi=\eta^V\circ\phi=0.
\end{equation}
\end{theorem}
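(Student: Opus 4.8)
The plan is to reduce the statement to the classical paracontact identities by exploiting that the tensor $\phi$ in (\ref{pa1}) is block-diagonal with respect to the splitting (\ref{decom}). First I would record the elementary facts that follow from the coordinate expressions (\ref{pa1})--(\ref{pa2}): $\phi$ sends horizontal d-vector fields to horizontal ones and vertical d-vector fields to vertical ones; $\eta^H$ annihilates every vertical d-vector field and $\eta^V$ annihilates every horizontal one; in particular $\eta^V(\xi^H)=\eta^H(\xi^V)=0$. Substituting these into (\ref{C1}) gives the restricted structure equations
\[
\phi^2(X^H)=X^H-\eta^H(X^H)\xi^H,\qquad \phi^2(X^V)=X^V-\eta^V(X^V)\xi^V,
\]
valid for every vector field $X$ on $E$, so each of the horizontal and vertical blocks behaves exactly like an ordinary almost paracontact structure. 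Note also that $\eta^H(\xi^H)=1$ forces $\xi^H\neq 0$, and similarly $\xi^V\neq 0$; these non-vanishing facts are used at the end.

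Next I would prove $\phi(\xi^H)=0$ (the argument for $\phi(\xi^V)=0$ being identical with the vertical block). Putting $X^H=\xi^H$ in the first restricted equation and using $\eta^H(\xi^H)=1$ gives $\phi^2(\xi^H)=0$, so $Z:=\phi(\xi^H)$ is a horizontal d-vector field with $\phi(Z)=0$. Applying the restricted equation to $Z$ yields $0=\phi^2(Z)=Z-\eta^H(Z)\xi^H$, i.e. $Z=\eta^H(Z)\xi^H$. Applying $\phi$ to this identity and using $\phi(Z)=0$ together with $\phi(\xi^H)=Z$ gives $0=\eta^H(Z)\,Z=\eta^H(Z)^2\,\xi^H$; since $\xi^H\neq 0$ this forces $\eta^H(Z)=0$, hence $Z=\eta^H(Z)\xi^H=0$, that is, $\phi(\xi^H)=0$.

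Finally I would derive $\eta^H\circ\phi=0$. For arbitrary $X=X^H+X^V$ on $E$, the vector $\phi(X^V)$ is vertical, so $\eta^H(\phi(X^V))=0$ and it remains to show $\eta^H(\phi(X^H))=0$. Using $\phi^2\circ\phi=\phi\circ\phi^2$ on $X^H$, the restricted equation applied to $\phi(X^H)$ and to $X^H$, and the just-proved $\phi(\xi^H)=0$, one obtains $\phi(X^H)=\phi^2\big(\phi(X^H)\big)=\phi(X^H)-\eta^H(\phi(X^H))\xi^H$, hence $\eta^H(\phi(X^H))\xi^H=0$, and since $\xi^H\neq 0$ this gives $\eta^H\circ\phi=0$; the symmetric computation in the vertical block gives $\eta^V\circ\phi=0$. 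I do not expect a genuine obstacle: the only thing requiring care is the bookkeeping of the horizontal/vertical splitting — verifying that $\phi$ respects the bidistribution and that the cross terms $\eta^V(\xi^H),\eta^H(\xi^V)$ vanish — after which the proof is the standard formal manipulation for almost paracontact structures carried out on each block.
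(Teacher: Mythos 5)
Your proposal is correct and follows essentially the same route as the paper: both derive $\phi^2(\xi^H)=0$ from (\ref{C1}) together with $\eta^V(\xi^H)=0$, deduce $\phi(\xi^H)=\eta^H(\phi(\xi^H))\xi^H$, and force $\eta^H(\phi(\xi^H))=0$ via the squared coefficient (you argue directly where the paper phrases it as a contradiction with a ``nontrivial eigenvector''), and then both obtain $\eta^H\circ\phi=0$ from the identity $\phi^3(X^H)=\phi(X^H)$ using $\phi(\xi^H)=0$. The block-by-block bookkeeping you make explicit ($\phi$ preserving the splitting, $\eta^H(\xi^V)=\eta^V(\xi^H)=0$, $\xi^H,\xi^V\neq 0$) is used implicitly in the paper, so there is no substantive difference.
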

\begin{proof}
By (\ref{C1}) and  $\eta^V(\xi^H)=0$, we have
\[
\phi^2(\xi^H)=\xi^H-\eta^H(\xi^H)\xi^H=0.
\]
Then $\phi(\xi^H)=0$ or $\phi(\xi^H)$ is a nontrivial eigenvector of $\phi$ corresponding to the eigenvalue 0. Since $\phi(\xi^H)\in HE$, then  $\eta^V(\phi(\xi^H))=0$. Using (\ref{C1}),  we obtain
\[
0=\phi^2(\phi(\xi^H))=\phi(\xi^H)-\eta^H(\phi(\xi^H))\xi^H \  \ \textrm{ or} \ \ \ \phi(\xi^H)=\eta^H(\phi(\xi^H))\xi^H.
\]
Now, if $\phi(\xi^H)$ is nontrivial eigenvector of the eigenvalue 0, then $\eta^H(\phi(\xi^H))\neq0$. Thus we have
\[
0=\phi^2(\xi^H)=\eta^H(\phi(\xi^H))\phi(\xi^H)=(\eta^H(\phi(\xi^H)))^2\xi^H\neq 0,
\]
which is a contradiction. Therefore  $\phi(\xi^H)=0$. Similarly we get $\phi(\xi^V)=0$.

On the other hand, since $\phi(\xi^H)=0$, then we get
\begin{align*}
\eta^H(\phi(X))\xi^H&=\eta^H(\phi(X^H))\xi^H=\phi(X^H)-\phi^3(X^H)\\
&=\phi(X^H)-\phi(X^H)+\phi(\eta^H(X^H)\xi^H)=0,
\end{align*}
for any $X\in \chi(E)$. Hence $\eta^H\circ\phi=0$. Similarly we have $\eta^V\circ\phi=0$.
\end{proof}

\bigskip

\begin{rem}
Let us put
\[
\phi^H=\phi^i_j(x, y)\frac{\delta}{\delta x^i}\otimes dx^j\ \ and \ \ \ \phi^V=\bar{\phi}^a_b\frac{\partial}{\partial y^a}\otimes\delta y^b,
\]
Then by Theorem \ref{feri1}, we deduce that $(\phi^H, \eta^H, \xi^H)$ and $(\phi^V, \eta^V, \xi^V)$ are almost paracontact structures on subbundles $HE$ and $VE$, respectively. Therefore for all $u\in E$,  $H_uE$ and $V_uE$ have odd dimensions $n=2k_1+1$ and $m=2k_2+1$, respectively. Therefore we have $dim E=m+n=2(k_1+k_2)+2$, i.e., the dimension of an almost paracontact Finsler vector bundle should be even.
\end{rem}

\bigskip

\begin{proposition}
Let $E^{2(k_1+k_2)+2}$ has an almost paracontact Finsler structure $(\phi, \eta, \xi)$. Then the endomorphism $\phi$ has rank $2(k_1+k_2)$.
\end{proposition}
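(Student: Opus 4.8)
The plan is to reduce the rank computation on $E$ to the two already-established almost paracontact structures $(\phi^H,\eta^H,\xi^H)$ on $HE$ and $(\phi^V,\eta^V,\xi^V)$ on $VE$, since $\phi$ preserves the decomposition $TE = HE\oplus VE$ and acts as $\phi^H$ on the first summand and $\phi^V$ on the second. Thus $\operatorname{rank}\phi = \operatorname{rank}\phi^H + \operatorname{rank}\phi^V$, and it suffices to show $\operatorname{rank}\phi^H = 2k_1$ and $\operatorname{rank}\phi^V = 2k_2$; the two cases are identical, so I would write out only the horizontal one.

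For the horizontal part, first I would show $\operatorname{rank}\phi^H \le 2k_1$ by exhibiting a large kernel. From $\phi^2 = I - \eta^H\otimes\xi^H - \eta^V\otimes\xi^V$, restricted to $HE$ this reads $(\phi^H)^2 = I_{HE} - \eta^H\otimes\xi^H$. Hence on $HE$ we have $(\phi^H)^3 = \phi^H$, so $\phi^H\circ((\phi^H)^2 - I_{HE}) = 0$, i.e. the image of $(\phi^H)^2 - I_{HE} = -\eta^H\otimes\xi^H$ lies in $\ker\phi^H$; but more usefully, any eigenvalue of $\phi^H$ is $0$, $1$, or $-1$. I would instead argue directly: the endomorphism $\psi := (\phi^H)^2 = I_{HE} - \eta^H\otimes\xi^H$ is a projection (check $\psi^2 = \psi$ using $\eta^H(\xi^H)=1$), with $\psi(\xi^H) = 0$ and $\psi = I$ on $\ker\eta^H$, so $\operatorname{rank}\psi = n - 1 = 2k_1$. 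Since $\operatorname{im}\psi = \operatorname{im}(\phi^H)^2 \subseteq \operatorname{im}\phi^H$, we get $\operatorname{rank}\phi^H \ge 2k_1$. For the reverse inequality, note $\phi^H(\xi^H) = 0$ by Theorem~\ref{feri1}, and moreover every element of $\operatorname{im}\phi^H$ lies in $\ker\eta^H$ because $\eta^H\circ\phi^H = 0$; applying $\phi^H$ once more to an element $\phi^H(X)$ of the image and using that $\phi^H$ is injective on $\operatorname{im}\psi = \ker\eta^H$ (if $\phi^H(Z)=0$ with $Z\in\ker\eta^H$ then $0 = (\phi^H)^2(Z) = \psi(Z) = Z$), one concludes $\phi^H$ maps $\ker\eta^H$ bijectively onto $\operatorname{im}\phi^H$ and kills $\xi^H$, so $\operatorname{rank}\phi^H = \dim\ker\eta^H = 2k_1$. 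The same argument on $VE$ gives $\operatorname{rank}\phi^V = 2k_2$, and adding yields $\operatorname{rank}\phi = 2(k_1+k_2)$.

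The only mild subtlety — and the step I would be most careful with — is the splitting $\operatorname{rank}\phi = \operatorname{rank}\phi^H + \operatorname{rank}\phi^V$: one must note that $\phi$, written in the adapted basis as in~(\ref{pa1}), is block-diagonal with respect to $TE = HE\oplus VE$ (it sends horizontal vectors to horizontal vectors and vertical to vertical), so its matrix is the direct sum of the matrices of $\phi^H$ and $\phi^V$, whence the ranks add. Everything else is the routine eigenvalue/projection bookkeeping already implicit in the preceding Remark, where oddness of $n$ and $m$ was deduced from exactly these facts.
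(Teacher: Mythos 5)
Your proof is correct, but it is organized differently from the paper's. The paper does everything in one stroke on $T_uE$: it shows $\ker\phi=\langle\xi^H\rangle\oplus\langle\xi^V\rangle$ directly from $\phi^2=I-\eta^H\otimes\xi^H-\eta^V\otimes\xi^V$ (if $\phi\bar\xi=0$ then $\bar\xi=\eta^H(\bar\xi)\xi^H+\eta^V(\bar\xi)\xi^V$), and then the rank $2(k_1+k_2)$ follows by rank--nullity, since $\dim E=2(k_1+k_2)+2$. You instead exploit the block-diagonal form of $\phi$ in the adapted basis to get $\operatorname{rank}\phi=\operatorname{rank}\phi^H+\operatorname{rank}\phi^V$, and then compute each block's rank by a two-sided argument: the lower bound from $\operatorname{im}\bigl((\phi^H)^2\bigr)\subseteq\operatorname{im}\phi^H$ together with the fact that $(\phi^H)^2=I-\eta^H\otimes\xi^H$ is a corank-one projection, and the upper bound (in fact equality) from injectivity of $\phi^H$ on $\ker\eta^H$ plus $\phi^H\xi^H=0$. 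Note that this injectivity step is exactly the paper's kernel computation restricted to one block, so the algebraic engine is the same identity $\phi^2=I-\eta^H\otimes\xi^H-\eta^V\otimes\xi^V$; what your version buys is slightly more information (that $\phi^H$ maps $\ker\eta^H$ isomorphically onto $\operatorname{im}\phi^H$, and likewise vertically) without invoking rank--nullity globally, at the cost of being longer; the paper's version is shorter and avoids having to justify the rank additivity over $HE\oplus VE$, which you correctly flag as the one point needing the block structure from~(\ref{pa1}). Both arguments are valid.
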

\begin{proof}
It is sufficient to show that $\ker\phi=<\xi^H>\oplus<\xi^V>$. Since $\phi\xi^H=\phi\xi^V=0$, then we have $<\xi^H>\oplus<\xi^V>\subseteq\ker\phi$. Now, let $\bar{\xi}\in\ker\phi$. Then $\phi\bar{\xi}=0$ and (\ref{C1}) give us
\[
\bar{\xi}=\eta^H(\bar{\xi})\xi^H+\eta^V(\bar{\xi})\eta^V\in<\xi^H>\oplus<\xi^V>,
\]
i.e., $\ker\phi\subseteq<\xi^H>\oplus<\xi^V>$. Thus $\ker\phi=<\xi^H>\oplus<\xi^V>$.
\end{proof}

\bigskip

We say that an almost paracontact Finsler structure $(\phi, \eta, \xi)$ on the vector bundle $E$ is \textit{normal}, if the following holds
\begin{equation}
N^{(1)}(X, Y)=N_\phi(X, Y)-d\eta^H(X, Y)\xi^H-d\eta^V(X, Y)\xi^V=0,
\end{equation}
where $X, Y$ are vector fields  on $E$.

Now, we are going to give some equivalent conditions for normality of $(\phi, \eta, \xi)$. For this reason, we introduce three tensors $N^{(2)}$, $N^{(3)}$ and $N^{(4)}$ and  show that the vanishing of $N^{(1)}$ implies the vanishing of these tensors. First, we define the tensor $N^{(2)}$ on $T_uE$ as follows
\begin{eqnarray}
N^{(2)}(X^H, Y^H)\!\!\!\!&=&\!\!\!\!(\pounds^H_{\phi X}\eta^H)(Y^H)-(\pounds^H_{\phi Y}\eta^H)(X^H),\\
N^{(2)}(X^V, Y^V)\!\!\!\!&=&\!\!\!\!(\pounds^V_{\phi X}\eta^V)(Y^V)-(\pounds^V_{\phi Y}\eta^V)(X^V),\\
N^{(2)}(X^V, Y^H)\!\!\!\!&=&\!\!\!\!(\pounds^V_{\phi X}\eta^H)(Y^H)+(\pounds^V_{\phi X}\eta^V)(Y^H)\nonumber\\
&&\!\!\!\!-(\pounds^H_{\phi Y}\eta^H)(X^V)-(\pounds^H_{\phi Y}\eta^V)(X^V).
\end{eqnarray}
To define $N^{(3)}$ and $N^{(4)}$, we consider the following cases:\\\\
\noindent
\textbf{Case 1:} For $X^H, \xi^H\in H_uE$, we define
\begin{equation}
N^{(3)}(X^H)=(\pounds^H_\xi\phi)(X^H),\ \ \ \ N^{(4)}(X^H)=(\pounds^H_\xi\eta^H)(X^H).
\end{equation}
\noindent
\textbf{Case 2:} For $X^V, \xi^V\in V_uE$, we define
\begin{equation}
N^{(3)}(X^V)=(\pounds^V_\xi\phi)(X^V),\ \ \ \ N^{(4)}(X^V)=(\pounds^V_\xi\eta^V)(X^V).
\end{equation}
\noindent
\textbf{Case 3:} For $X^H\in H_uE$ and $\xi^V\in V_uE$, we define
\begin{equation}
N^{(3)}(X^H)=(\pounds^V_\xi\phi)(X^H),\ \ \ \ N^{(4)}(X^H)=(\pounds^V_\xi\eta^H)(X^H).
\end{equation}
\noindent
\textbf{Case 4:} For $X^V\in V_uE$ and $\xi^H\in H_uE$, we define
\begin{equation}
N^{(3)}(X^V)=(\pounds^H_\xi\phi)(X^V),\ \ \ \ N^{(4)}(X^V)=(\pounds^H_\xi\eta^V)(X^V).
\end{equation}

\bigskip

\begin{theorem}
For any almost paracontact Finsler structure $(\phi, \eta, \xi)$ the vanishing of $N^{(1)}$ implies the vanishing of $N^{(2)}$, $N^{(3)}$ and $N^{(4)}$.
\end{theorem}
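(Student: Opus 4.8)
The plan is to follow the classical Sasaki--Hatakeyama argument (as adapted to almost contact structures) but carried out separately in the horizontal and vertical sub-bundles, which is legitimate because the previous remark established that $(\phi^H,\eta^H,\xi^H)$ and $(\phi^V,\eta^V,\xi^V)$ are honest almost paracontact structures on $HE$ and $VE$. First I would expand the defining identity $N^{(1)}=0$ on pairs of vector fields of pure type. Evaluating $N^{(1)}(X^H,Y^H)=0$, contracting with $\eta^H$ (respectively applying $\phi$), and using $\eta^H(\xi^H)=1$, $\eta^H\circ\phi=0$, $\phi\xi^H=0$ from Theorem \ref{feri1}, should isolate the Lie-derivative combinations that make up $N^{(2)}(X^H,Y^H)$; the key computational identity is $(\pounds^H_{\phi X}\eta^H)(Y^H)=(\phi X^H)\eta^H(Y^H)-\eta^H([\phi X^H,Y^H])$, which lets one rewrite $N^{(2)}$ in terms of $d\eta^H$ and the Nijenhuis-type terms appearing in $N_\phi$.

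Next, to get $N^{(3)}$ and $N^{(4)}$, I would substitute $Y=\xi$ (in the appropriate component) into $N^{(1)}=0$. Using $\phi\xi^H=\phi\xi^V=0$ kills the $\phi[\phi X,\xi]$ and $[\phi X,\phi\xi]$ terms in the Nijenhuis tensor, and $d\eta^H(X^H,\xi^H)$ can be re-expressed via $\pounds^H_\xi\eta^H$; what survives is precisely $-\phi(\pounds_\xi\phi)(X)$ type expressions, from which applying $\phi$ once more and using $\phi^2=I-\eta^H\otimes\xi^H-\eta^V\otimes\xi^V$ recovers $N^{(3)}$, while contracting with $\eta$ recovers $N^{(4)}$. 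Each of Cases 1--4 is handled by choosing the matching component of $\xi$ and of $X$ and repeating this reduction; Cases 3 and 4 additionally use the vanishing of the mixed terms $\eta^V(\xi^H)=\eta^H(\xi^V)=0$.

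The main obstacle I anticipate is bookkeeping rather than conceptual: in the mixed case $N^{(2)}(X^V,Y^H)$ the bracket $[X^V,Y^H]$ has both horizontal and vertical parts, so one must track the $(v)h$-, $(h)hv$- and $(v)hv$-torsion-type splittings of the Lie bracket carefully, and one must check that the four pure-type evaluations of $N^{(1)}$ really do supply enough independent relations to pin down all three mixed Lie-derivative terms; a subtlety is that $d\eta^H$ and $d\eta^V$ each split into $hh$-, $hv$- and $vv$-pieces, and one needs $N^{(1)}=0$ on all combinations of pure-type arguments to control them. I would organize the write-up as: (i) the expansion lemma for $\pounds_\bullet\eta$ in adapted frames, (ii) the horizontal computation giving $N^{(2)}(X^H,Y^H)=0$ and, mutatis mutandis, the vertical one, (iii) the mixed computation, and (iv) the four-case argument for $N^{(3)}=N^{(4)}=0$, at each stage invoking only Theorem \ref{feri1} and the structure equations \eqref{C1}.
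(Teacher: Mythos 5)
Your plan is essentially the paper's own argument: evaluate $N^{(1)}$ on pure-type arguments, substitute $\xi$ and contract with $\eta^H,\eta^V$ to kill $N^{(4)}$, apply $\phi$ together with $\phi^2=I-\eta^H\otimes\xi^H-\eta^V\otimes\xi^V$ to kill $N^{(3)}$, and obtain $N^{(2)}=0$ from the $\eta$-contraction of $N^{(1)}$ with a $\phi$-twisted argument, handling horizontal, vertical and mixed cases separately. The only adjustment in the write-up is one of ordering: the auxiliary identities $\eta^H([\xi^H,\phi Y^H])=\eta^V([\xi^H,\phi Y^H])=0$ (and their mixed analogues), which come from the $\xi$-substitution step, must be in hand before the $N^{(2)}$ computation, since a term of the form $\eta(X)\,\eta([\xi,\phi Y])$ survives that contraction — exactly as in the paper's proof.
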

\begin{proof}
If $N^{(1)}=0$, then for $ X^H $ and $ \xi^H $ we have
\begin{eqnarray}\label{salar}
\nonumber0\!\!\!\!\!&=&\!\!\!\!\!N^{(1)}(X^H,\xi^H)\\
\nonumber\!\!\!\!\!&=&\!\!\!\!\! \phi^2[X^H,\xi^H]+[\phi X^H,\phi \xi^H]-\phi[\phi X^H,\xi^H]-\phi[X^H,\phi \xi^H]\nonumber\\
&&\!\!\!-d\eta^H(X^H,\xi^H)\xi^H-d\eta^V(X^H, \xi^H)\xi^V\nonumber\\
\!\!\!\!\!&=&\!\!\!\!\!\phi^2[X^H,\xi^H]-\phi[\phi X^H,\xi^H]-d\eta^H(X^H,\xi^H)\xi^H-d\eta^V(X^H,\xi^H)\xi^V.
\end{eqnarray}
Applying $\eta^H $ to (\ref{salar}) implies
\[
d\eta^H(X^H,\xi^H)=0,
\]
which gives
\begin{eqnarray*}
N^{(4)}(X^H)=(\pounds^H_\xi\eta^H)(X^H)\!\!\!\!\!&=&\!\!\!\!\! \xi^H\big(\eta^H(X^H)\big)-\eta^H[\xi^H, X^H]\\
\!\!\!\!\!&=&\!\!\!\!\!-d\eta^H(X^H, \xi^H)=0.
\end{eqnarray*}
Since $d\eta^H(X^H,\xi^H)=0$, then by (\ref{salar}) we have
\begin{equation}\label{salar20}
0=\phi^2[X^H,\xi^H]-\phi[\phi X^H,\xi^H]=\phi\big((\pounds^H_\xi\phi)X^H\big).
\end{equation}
Similar to (\ref{salar}), we obtain
\begin{eqnarray*}
0\!\!\!\!\!&=&\!\!\!\!\!\eta^H\Big(N^{(1)}(\phi X^H, \xi^H)\Big)=d\eta^H(\xi^H, \phi X^H),\\
0\!\!\!\!\!&=&\!\!\!\!\!\eta^V\Big(N^{(1)}(\phi X^H, \xi^H)\Big)=d\eta^V(\xi^H, \phi X^H).
\end{eqnarray*}
which imply that
\[
\eta^H([\xi^H, \phi X^H])=0,\ \ \ \eta^V([\xi^H, \phi X^H])=0.
\]
Applying $\phi$ to (\ref{salar20}) and using the above equation, we have $(\pounds^H_\xi\phi)X^H=0$, i.e., $N^{(3)}(X^H)=0$. Applying $\eta^H$ to the following
\begin{eqnarray*}
\nonumber0\!\!\!\!\!&=&\!\!\!\!\!N^{(1)}(\phi X^H,Y^H)=[X^H,\phi Y^H]-\eta^H(X^H)[\xi^H,\phi Y^H]+\phi Y^H\big(\eta^H(X^H)\big)\xi^H\\
\nonumber&&-\phi[X^H,Y^H]-\phi[\phi X^H,\phi Y^H]+[\phi X^H,Y^H]-\phi X^H\big(\eta^H(Y^H)\big)\xi^H\\
&&+\eta^H(X^H)\phi[\xi^H, Y^H]+\eta^V[\phi X^H, Y^H]\xi^V.\label{C2}
\end{eqnarray*}
and using $\eta^H([\xi^H, \phi X^H])=0$, we get
\begin{align*}
0&=-\eta^H[\phi Y^H,X^H]+\phi Y^H(\eta^H(X^H))+\eta^H[\phi X^H,Y^H]-\phi X^H(\eta^H(Y^H))\\
&=(\pounds_{\phi Y}^{H}\eta^H)X^H-(\pounds_{\phi X}^{H}\eta^H)Y^H.
\end{align*}
Thus $ N^{(2)}(X^H,Y^H)=0$. By the similar way, we can conclude the vanishing of $N^{(2)}$, $N^{(3)}$ and $N^{(4)}$ from the vanishing of $N^{(1)}$, when $X^V$ and $Y^V$ belong to $V_uE$. Now we prove the result when one of them belongs to $V_uE$ and another belongs to $H_uE$.

Similar to (\ref{salar}), the vanishing of $N^{(1)}$ implies that
\begin{eqnarray}\label{Nij2}
\nonumber 0\!\!\!\!\!&=&\!\!\!\!\! N^{(1)}(X^V,\xi^H)\\
\!\!\!\!\!&=&\!\!\!\!\!\phi^2[X^V,\xi^H]-\phi[\phi X^V,\xi^H]-d\eta^H(X^V,\xi^H)\xi^H-d\eta^V(X^V,\xi^H)\xi^V.
\end{eqnarray}
By applying  $\eta^V$ and $\eta^H$ to (\ref{Nij2}),  we get
\begin{equation}\label{salar100}
d\eta^V(X^V,\xi^H)=0,\ \ \ \ d\eta^H(X^V,\xi^H)=0.
\end{equation}
But we have
\[
N^{(4)}(X^V)=(\pounds_{\xi}^{H}\eta^V)(X^V)=\xi^H(\eta^V(X^V))-\eta^V[\xi^H,X^V]=-d\eta^V(X^V,\xi^H).
\]
Therefore the first part of (\ref{salar100}) gives us $ N^{(4)}(X^V)=0 $. Using (\ref{Nij2}) and (\ref{salar100}), we obtain
\begin{eqnarray*}
0= \phi(N^{(1)}(X^V,\xi^H))\!\!\!\!\!&=&\!\!\!\!\!\phi[X^V,\xi^H]-[\phi X^V,\xi^H]\\
\!\!\!\!\!&=&\!\!\!\!\!(\pounds^H_\xi\phi)(X^V)\\
\!\!\!\!\!&=&\!\!\!\!\!N^{(3)}(X^V).
\end{eqnarray*}
Therefore $N^{(3)}(X^V)=0$. By similar way to (\ref{Nij2}), we obtain
\begin{eqnarray}
0\!\!\!\!&=&\!\!\!\!\eta^H\Big(N^{(1)}(\xi^V, \phi Y^H)\Big)=-d\eta^H(\xi^V, \phi Y^H),\label{o1}\\
0\!\!\!\!&=&\!\!\!\!\eta^V\Big(N^{(1)}(\xi^V, \phi Y^H)\Big)=-d\eta^V(\xi^V, \phi Y^H)\label{o2}
\end{eqnarray}
which give us
\be
\eta^H[\xi^V, \phi Y^H]=0,\ \ \ \eta^V[\xi^V, \phi Y^H]=0.\label{o3}
\ee
Using  (\ref{o3}) and the above relations we get
\begin{align*}
0&=\eta\Big(N^{(1)}(\phi X^V,Y^H)\Big)\\
&=\eta^H([X^V, \phi Y^H])+\eta^V([X^V,\phi Y^H])+\phi Y^H(\eta^V(X^V))+\eta^V([\phi X^V,Y^H])\\& \ \ \  \ \ \ -\phi X^V(\eta^H(Y^H))+\eta^H([\phi X^V, Y^H])\\
&=-N^{(2)}(X^V, Y^H),
\end{align*}
i.e., $N^{(2)}(X^V,Y^H)=0$.
\end{proof}
%-------------------------------------------------------------
%--------------------------------------------------------------
\section{Paracontact Finsler Structures}
A pseudo-metric structure $G$ on $E$ satisfying the conditions
\begin{align}
G^H(\phi X, \phi Y)&=-G^H(X, Y)+\eta^H(X)\eta^H(Y),\label{com}\\
G^V(\phi X, \phi Y)&=-G^V(X, Y)+\eta^V(X)\eta^V(Y).\label{com1}
\end{align}
is said to be compatible with the structure $(\phi, \eta, \xi)$. In this case, the quadruplet $(\phi, \eta, \xi, G)$ is called an almost paracontact metric Finsler structure and $E$ is called an almost paracontact metric Finsler vector bundle. From (\ref{com}) and (\ref{com1}) we deduce
\begin{equation}
G(\phi X, \phi Y)=-G(X, Y)+\eta^H(X)\eta^H(Y)+\eta^V(X)\eta^V(Y).
\end{equation}
By (\ref{com}) and (\ref{com1}) we have
\begin{equation}\label{com20}
G^H(X, \xi)=\eta^H(X),\ \ \ G^V(X, \xi)=\eta^V(X),
\end{equation}
which give us $G(X, \xi)=\eta(X)$. Using (\ref{com})-(\ref{com20}), one can also obtains
\begin{equation}
G(X^H, \phi Y^H)=-G(\phi X^H, Y^H),\ \ G(X^V, \phi Y^V)=-G(\phi X^V, Y^V).
\end{equation}
Now, we define the fundamental 2-form $\Phi$ by
\begin{equation}
\Phi(X, Y)=G(X, \phi Y),\ \ \ \forall X, Y\in\chi(E),
\end{equation}
which gives
\begin{align}
\Phi(X^H, Y^H)&=G^H(X, \phi Y),\ \ \ \ \Phi(X^V, Y^V)=G^V(X, \phi Y),\label{form}\\
\Phi(X^V, Y^H)&=-\Phi(Y^H, X^V)=G(X^V, \phi Y^H)=0.
\end{align}

\begin{defn}
\emph{Almost paracontact metric Finsler structure $(\phi, \eta, \xi, G)$ is called paracontact metric Finsler structure if}
\begin{equation}
d\eta^H(X, Y)=\Phi(X^H, Y^H), \ \ \ \ d\eta^V(X, Y)=\Phi(X^V, Y^V).\label{com2}
\end{equation}
\end{defn}

\smallskip

By (\ref{form}) and (\ref{com2}), it follows that $d\eta(X, Y)=G(X, \phi Y)$. Then we get the following
\[
d\eta(X^H, Y^H)=G(X^H, \phi Y^H)=G^H(X, \phi Y)=d\eta^H(X, Y).
\]
Similarly we obtain
\[
d\eta(X^V, Y^V)=d\eta^V(X, Y)\ \ \ \textrm{and}  \ \ \ d\eta(X^V, Y^H)=d\eta(X^H, Y^V)=0.
\]
Thus we deduce that $(\phi, \eta, \xi, G)$ is a paracontact metric Finsler structure if and only if the following hold
\begin{eqnarray*}
d\eta(X^H, Y^H)\!\!\!\!&=&\!\!\!\!d\eta^H(X, Y)=G^H(X, \phi Y),\\
d\eta(X^V, Y^V)\!\!\!\!&=&\!\!\!\!d\eta^V(X, Y)=G^V(X, \phi Y),\\
d\eta(X^H, Y^V)\!\!\!\!&=&\!\!\!\!d\eta(X^V, Y^H)=0.
\end{eqnarray*}
Moreover if this structure is normal then it is called \textit{para-Sasakian Finsler structure}.

Let $(\phi, \eta, \xi, G)$ be a paracontact metric Finsler structure on $E$. If  $\xi^H$ and $\xi^V$ are Killing  vector fields with respect to $G^H$ and $G^V$, respectively, then $(\phi, \eta, \xi, G)$  is called a \textit{$K$-paracontact Finsler structure} on $E$ and $E$ is called a \textit{$K$-paracontact Finsler vector bundle}.
\begin{theorem}\label{hasan1}
Let $ (\phi, \eta, \xi, G) $ be a paracontact metric Finsler structure on $E$. Then $ N^{(2)}=N^{(4)}=0 $. Moreover $ N^{(3)}=0 $ if and only if $\xi^H$ and $\xi^V$ are Killing vector fields with respect to $G^H$ and $G^V$, respectively.
\end{theorem}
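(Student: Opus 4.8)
The plan is to work separately on the horizontal and vertical subbundles, using the fact (from the Remark following Theorem~\ref{feri1}) that $(\phi^H,\eta^H,\xi^H)$ and $(\phi^V,\eta^V,\xi^V)$ are honest almost paracontact structures on $HE$ and $VE$, together with the paracontact metric conditions~(\ref{com})--(\ref{com2}). First I would dispose of $N^{(4)}$: by definition $N^{(4)}(X^H)=(\pounds^H_\xi\eta^H)(X^H)=\xi^H(\eta^H(X^H))-\eta^H[\xi^H,X^H]=-d\eta^H(X^H,\xi^H)$, and by (\ref{com2}) this equals $-\Phi(X^H,\xi^H)=-G^H(X,\phi\xi^H)=-G^H(X,\phi(\xi^H))$; since $\phi(\xi^H)=0$ by Theorem~\ref{feri1}, we get $N^{(4)}(X^H)=0$. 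The same computation with $V$ in place of $H$ gives $N^{(4)}(X^V)=0$, and the mixed cases (Cases~3 and~4) are handled identically, each reducing to $d\eta^V(X^V,\xi^H)$ or $d\eta^H(X^V,\xi^H)$, which vanish because the mixed components $d\eta(X^H,Y^V)$ are zero and $\phi(\xi)=0$. Hence $N^{(4)}=0$.

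Next I would treat $N^{(2)}$. Expanding $N^{(2)}(X^H,Y^H)=(\pounds^H_{\phi X}\eta^H)(Y^H)-(\pounds^H_{\phi Y}\eta^H)(X^H)$ via the Lie-derivative formula, each term becomes a combination of $(\phi X^H)(\eta^H(Y^H))$, $(\phi Y^H)(\eta^H(X^H))$ and $\eta^H[\phi X^H,Y^H]$-type brackets, which reassemble into $2\,d\eta^H(\phi X^H,Y^H)+2\,d\eta^H(X^H,\phi Y^H)$ up to the usual factor conventions. Using (\ref{com2}) this is $2\Phi(\phi X^H,Y^H)+2\Phi(X^H,\phi Y^H)=2G^H(\phi X,\phi Y)+2G^H(X,\phi^2 Y)$, and now $G^H(\phi X,\phi Y)=-G^H(X,Y)+\eta^H(X)\eta^H(Y)$ by (\ref{com}) while $\phi^2Y^H=Y^H-\eta^H(Y^H)\xi^H$, so $G^H(X,\phi^2 Y)=G^H(X,Y)-\eta^H(Y)\eta^H(X)$ using (\ref{com20}); the two cancel and $N^{(2)}(X^H,Y^H)=0$. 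The purely vertical case is word-for-word the same with (\ref{com1}), and the mixed case $N^{(2)}(X^V,Y^H)$ collapses because all the terms that survive involve either $d\eta$ of a mixed pair (zero) or $\phi$ applied across the $H/V$ splitting (zero, since $\phi$ preserves the splitting). So $N^{(2)}=0$.

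Finally, for $N^{(3)}$ the claim is an equivalence, so I would argue both directions. For the horizontal case, $N^{(3)}(X^H)=(\pounds^H_\xi\phi)(X^H)=[\xi^H,\phi X^H]-\phi[\xi^H,X^H]$. The cleanest route is to pair this with $G^H$: compute $G^H(N^{(3)}(X^H),Z^H)$ and, using that $D$ is a metrical symmetric (Finsler) $d$-connection so that Lie brackets can be traded for $D$ via (\ref{l2}) and $\pounds_{\xi}G^H$ measures the failure of $\xi^H$ to be Killing, show that $G^H(N^{(3)}(X^H),\phi Z^H)$ (and its $\phi$-twisted partner) is a fixed universal linear combination of $(\pounds_{\xi^H}G^H)(X^H,Z^H)$ and $(\pounds_{\xi^H}G^H)(\phi X^H,\phi Z^H)$; since $G^H$ is nondegenerate on $HE$ and $\phi$ has only the one-dimensional kernel $\langle\xi^H\rangle$ there (and $N^{(3)}(\xi^H)=0$ automatically because $\phi\xi^H=0$ and $[\xi^H,\xi^H]=0$), this pairing detecting $N^{(3)}$ vanishes iff $\pounds_{\xi^H}G^H=0$, i.e.\ iff $\xi^H$ is Killing. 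The vertical case is identical with $G^V$, and the $K$-paracontact condition is exactly the conjunction of the two. The main obstacle I anticipate is bookkeeping in this last step: getting the precise identity relating $(\pounds^H_\xi\phi)$ to $\pounds_{\xi^H}G^H$ requires carefully combining the compatibility relations (\ref{com})--(\ref{com20}), the paracontact condition (\ref{com2}) rewritten as $d\eta^H=\Phi^H$, and the metric/symmetry of the Finsler connection, and one must be attentive to the sign differences from the paracontact (as opposed to contact) identity $\phi^2=I-\eta\otimes\xi$ with the ``$-$'' in (\ref{com}); the horizontal/vertical and mixed cases then only need the observation that $\phi$, $h$ and $v$ all commute, so no genuinely new computation arises for the mixed Lie derivatives.
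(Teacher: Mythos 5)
Your treatment of $N^{(4)}$ and $N^{(2)}$ is correct and is essentially the paper's own computation: both reduce to $d\eta^H(\,\cdot\,,\xi^H)=G^H(\,\cdot\,,\phi\xi^H)=0$ and to the cancellation $d\eta^H(\phi X^H,Y^H)=G^H(\phi X^H,\phi Y^H)=-G^H(X^H,\phi^2Y^H)=-d\eta^H(X^H,\phi Y^H)$ coming from (\ref{com}), (\ref{com20}) and (\ref{com2}), with the mixed slots vanishing because $\phi$ preserves the splitting.

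The gap is in the $N^{(3)}$ equivalence, which is the substantive part of the theorem. Your whole argument rests on an identity expressing $G^H(N^{(3)}(X^H),\phi Z^H)$ as a ``fixed universal linear combination'' of Lie derivatives of $G^H$, but you never derive it; you only assert that trading brackets for $D$ via (\ref{l2}) should produce it, and you yourself flag this as the anticipated obstacle. The paper obtains exactly the needed identity, namely $(\pounds^H_\xi G^H)(X,\phi Y)=-G^H(X,(\pounds^H_\xi\phi)Y)$ as in (\ref{God1})--(\ref{God4}), not by connection bookkeeping but by Cartan's formula: $i_{\xi^H}d\eta^H=0$ (from $N^{(4)}=0$ together with $d\eta^H(\xi^H,X^V)=G^H(\xi^H,\phi X^V)=0$) forces $\pounds^H_\xi d\eta^H=0$, and Lie-differentiating $d\eta^H(\cdot,\cdot)=G^H(\cdot,\phi\,\cdot)$ gives the identity; without this (or an actual Koszul computation) your equivalence has no starting point. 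Moreover, even granting the identity, your converse direction is incomplete: $N^{(3)}=0$ only yields $(\pounds^H_\xi G^H)(X,\phi Y)=0$, i.e.\ vanishing of $\pounds^H_\xi G^H$ against the image of $\phi$. To conclude that $\xi^H$ is Killing you must still control the complementary slots: $(\pounds^H_\xi G^H)(X^H,\xi^H)$, which the paper kills using $N^{(4)}=0$ in (\ref{God6})--(\ref{God7}), and the mixed slot $(\pounds^H_\xi G^H)(X^H,Y^V)$, which reduces to $G^H([\xi^H,\xi^V],Y^H)$ and requires the separate argument (\ref{God8})--(\ref{God11}). Your nondegeneracy remark about $\ker\phi=\langle\xi^H\rangle$ concerns detecting $N^{(3)}$ from the pairing, not reconstructing $\pounds^H_\xi G^H$ from its values on $\phi$-images, so it does not close this hole; likewise the mixed components of $N^{(3)}$ (Cases 3 and 4, $(\pounds^V_\xi\phi)(X^H)$ and $(\pounds^H_\xi\phi)(X^V)$) are not reached by your purely horizontal and purely vertical pairings. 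As written, the ``if and only if'' is a plausible plan, not a proof.
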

\begin{proof}
Since $ (\phi, \eta, \xi, G) $ is a paracontact metric Finsler structure on $E$, then we have
\begin{align*}
0&=G^H(\xi^H,\phi X^H)=d\eta^H(\xi^H, X^H)=(\pounds_{\xi}^H\eta^H)(X^H)=N^{(4)}(X^H).
\end{align*}
We have also
\[
d\eta^H(\phi X^H, Y^H)=G^H(\phi X^H,\phi Y^H)=-G^H(X^H,\phi^2 Y^H)=-d\eta^H(X^H,\phi Y^H),
\]
which gives us $N^{(2)}(X^H, Y^H)=0$. Similarly, we obtain $N^{(2)}(X^V, Y^V)=0$. Using (\ref{com}) and (\ref{com2}),  we get
\[
d\eta^H(\phi X^V, Y^H)=d\eta^H(\phi Y^H, X^V)=d\eta^V(\phi X^V, Y^H)=d\eta^V(\phi Y^H, X^V)=0.
\]
The above equations give us $N^{(2)}(X^V, Y^H)=0$.

Now, we prove the second part of the Theorem. The following holds
\[
\pounds_{\xi}^H d \eta^H=i_\xi^H(d^2\eta^H)+d\circ i_\xi^Hd\eta^H=d\circ i_\xi^Hd\eta^H.
\]
Since $N^{(4)}=0$, then we obtain
\begin{equation}\label{110}
(i_{\xi^H}d\eta^H)(X^H)=d\eta^H(\xi^H, X^H)=N^{(4)}(X^H)=0.
\end{equation}
By assumption, we have
\be
d\eta^H(\xi^H, X^V)=G^H(\xi^H, \phi X^V)=0.\label{o4}
\ee
By (\ref{o4}), it follows that
\begin{equation}\label{111}
(i_{\xi^H}d\eta^H)(X^V)=d\eta^H(\xi^H, X^V)=0.
\end{equation}
Then (\ref{110}) and (\ref{111}) imply that $i_{\xi^H}d\eta^H=0$ and consequently $\pounds_{\xi}^H d \eta^H=0$. Similarly we obtain $\pounds_{\xi}^V d \eta^V=0$. Therefore  we get
\begin{align}
0&=(\pounds_{\xi}^H d\eta^H)(X, Y^H)=(\pounds_{\xi}^HG^H)(X,\phi Y^H)+G^H(X, (\pounds^H_\xi\phi)(Y^H)),\label{God1}\\
0&=(\pounds_{\xi}^H d\eta^H)(X, Y^V)=(\pounds_{\xi}^HG^H)(X,\phi Y^V)+G^H(X, (\pounds^H_\xi\phi)(Y^V)),\label{God2}\\
0&=(\pounds_{\xi}^V d\eta^V)(X, Y^H)=(\pounds_{\xi}^VG^V)(X,\phi Y^H)+G^V(X, (\pounds^V_\xi\phi)(Y^H)),\label{God3}\\
0&=(\pounds_{\xi}^V d\eta^V)(X, Y^V)=(\pounds_{\xi}^VG^V)(X,\phi Y^V)+G^V(X, (\pounds^V_\xi\phi)(Y^V)).\label{God4}
\end{align}
By  these equations,  we conclude that if $\pounds_{\xi}^HG^H=\pounds_{\xi}^VG^V=0$, then $N^{(3)}=0$.

Conversely, let $N^{(3)}=0$. Then from (\ref{God1})-(\ref{God4}) we get
\begin{equation}\label{God5}
(i)\ (\pounds_{\xi}^HG^H)(X,\phi Y)=0,\ \ \ (ii)\ (\pounds_{\xi}^VG^V)(X,\phi Y)=0.
\end{equation}
Now, we show that $(\pounds_{\xi}^HG^H)(X, Y)=0$. It is easy to see that
\[
(\pounds_{\xi}^HG^H)(X^V, Y^V)=0.
\]
Using part (i) of (\ref{God5}),  we obtain
\begin{align}\label{God6}
(\pounds_{\xi}^HG^H)(X^H, Y^H)&=(\pounds_{\xi}^HG^H)(X^H, \phi^2Y^H)+\eta^H(Y^H)(\pounds_{\xi}^HG^H)(X^H, \xi^H)\nonumber\\
&=\eta^H(Y^H)(\pounds_{\xi}^HG^H)(X^H, \xi^H).
\end{align}
Since $N^{(4)}=0$, then we have
\begin{equation}\label{God7}
(\pounds_{\xi}^HG^H)(X^H, \xi^H)=(\pounds_\xi^H\eta^H)(X^H)=0.
\end{equation}
The relations (\ref{God6}) and (\ref{God7}) give us
\[
(\pounds_{\xi}^HG^H)(X^H, Y^H)=0.
\]
By  part (i) of (\ref{God5}),  we get
\begin{align}\label{God8}
(\pounds_{\xi}^HG^H)(X^H, Y^V)&=(\pounds_{\xi}^HG^H)(X^H, \phi^2Y^V)+\eta^V(Y^V)(\pounds_{\xi}^HG^H)(X^H, \xi^V)\nonumber\\
&=-\eta^V(Y^V)G^H(X^H, [\xi^H, \xi^V]).
\end{align}
Again by using part (i) of (\ref{God5}), it follows that
\begin{align}\label{God9}
0&=(\pounds_{\xi}^HG^H)(\xi^V, \phi^2Y^H)=-G^H([\xi^H, \xi^V], \phi^2 Y^H)\nonumber\\
&=-G^H([\xi^H, \xi^V], Y^H)+\eta^H(Y^H)G^H([\xi^H, \xi^V], \xi^H)\nonumber\\
&=-G^H([\xi^H, \xi^V], Y^H)+\eta^H(Y^H)\eta^H([\xi^H,\xi^V]).
\end{align}
Since $N^{(4)}=0$, then we have
\begin{equation}\label{God10}
0=(\pounds_{\xi^V}\eta^H)(\xi^H)=-\eta^H([\xi^V, \xi^H]).
\end{equation}
Plugging (\ref{God10}) in (\ref{God9}) implies that
\begin{equation}\label{God11}
G^H([\xi^H, \xi^V], Y^H)=0.
\end{equation}
Then   (\ref{God8}) reduces to the following
\[
(\pounds_{\xi}^HG^H)(X^H, Y^V)=0.
\]
It results that $(\pounds_{\xi}^HG^H)(X, Y)=0$, where $X, Y\in\chi(E)$. Similarly, we can obtain $(\pounds_{\xi}^VG^V)(X, Y)=0$. This completes the proof.
\end{proof}

\bigskip
In the next proposition, we explain an important relation as a big gadget for our next  purposes.
%Using (\ref{l2}) we have the following relations for Finsler connection
%\begin{align}\label{l3}
%2G(D_X^HY^H, Z^H)&=X^HG(Y^H, Z^H)+Y^HG(X^H, Z^H)-Z^HG(X^H,Y^H)\nonumber\\
%&\ \ \ +G([X^H, Y^H], Z^H)-G([X^H, Z^H], Y^H)-G([Y^H, Z^H], X^H),
%\end{align}
%and
%\begin{align}\label{l4}
%2G(D_X^VY^V, Z^V)&=X^VG(Y^V, Z^V)+Y^VG(X^V, Z^V)-Z^VG(X^V,Y^V)\nonumber\\
%&\ \ \ +G([X^V, Y^V], Z^V)-G([X^V, Z^V], Y^V)-G([Y^V, Z^V], X^V).
%\end{align}
%\begin{lemma}\label{esi1}
%For any almost Paracontact Finsler structure $ (\phi,\eta,\xi,G) $ on $E$, we have
%\begin{align}\label{shari}
%&2G((D_{_{X}} \phi)Y,Z)+G(T(\phi Y, X), Z)+G(T(X, Z), \phi Y)+G(T(\phi Y, Z), X)\nonumber\\
%&+G(T(Y, X), \phi Z)+G(T(X, \phi Z), Y)+G(T(Y, \phi Z), X)=-d\Phi(X,\phi Y,\phi Z)\nonumber\\
%&-d\Phi(X,Y,Z)-G( N^{(1)}(Y,Z),\phi X)+ N^{(2)}(Y,Z)\eta(X)+d \eta(\phi Y,X)\eta(Z)\nonumber\\
%&-d \eta(\phi Z,X)\eta(Y),
%\end{align}
%where $D$ is a Finsler connection compatible with $G$.
%\end{lemma}

\smallskip

\begin{proposition}\label{esi1}
Let $(\phi,\eta,\xi,G)$ be an almost Paracontact metric Finsler structure  on $E$. Then the following hold
\begin{align}
2G((D_{_{X^H}} \phi)Y^H,Z^H)&=-d\Phi(X^H,\phi Y^H,\phi Z^H)-d\Phi(X^H,Y^H,Z^H)\nonumber\\
& -G\big( N^{(1)}(Y^H,Z^H),\phi X^H\big)+N^{(2)}(Y^H,Z^H)\eta(X^H)\nonumber\\
&+d\eta^H(\phi Y^H,X^H)\eta(Z^H)-d\eta^H(\phi Z^H,X^H)\eta(Y^H),\label{mohi}\\
2G((D_{_{X^V}} \phi)Y^V,Z^V)&=-d\Phi(X^V,\phi Y^V,\phi Z^V)-d\Phi(X^V,Y^V,Z^V)\nonumber\\
& -G( N^{(1)}(Y^V,Z^V),\phi X^V)+N^{(2)}(Y^V,Z^V)\eta(X^V)\nonumber\\
& +d\eta^V(\phi Y^V,X^V)\eta(Z^V)-d\eta^V(\phi Z^V,X^V)\eta(Y^V).\label{mohi1}
\end{align}
\end{proposition}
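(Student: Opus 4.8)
Proof idea. The plan is to imitate the classical computation of $2g((\nabla_X\phi)Y,Z)$ in almost (para)contact metric geometry, carried out here on the horizontal distribution of $E$. The two ingredients are the Koszul-type identity (\ref{l2}) for the Finsler connection $D$ restricted to horizontal vector fields, together with the intrinsic formula for the exterior derivative of the $2$-form $\Phi$ on three horizontal vector fields, namely the alternating sum of the three terms of the shape $X^H\Phi(Y^H,Z^H)$ and the three bracket terms of the shape $-\Phi([X^H,Y^H],Z^H)$. A preliminary observation makes the whole computation legitimate: since $\phi Y^H$, $\phi Z^H$ and $\xi^H$ are again horizontal and $G=G^H+G^V$, every $G$-pairing occurring below only sees horizontal components of brackets, so the $(v)h$-torsion $T^V(X^H,Y^H)=-[X^H,Y^H]^V$ never interferes and $D^H$ may be treated as a torsion-free metric connection on the horizontal bundle; in particular $D^H_XY^H-D^H_YX^H$ and $[X^H,Y^H]$ may be used interchangeably inside $G(\,\cdot\,,W^H)$.

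First I would use metricity $D^HG=0$ together with the skew-symmetry $G(\phi A,B)=-G(A,\phi B)$ for horizontal $A,B$ (a consequence of (\ref{com})) to write
\[
G\big((D_{X^H}\phi)Y^H,Z^H\big)=G\big(D_{X^H}(\phi Y^H),Z^H\big)+G\big(D_{X^H}Y^H,\phi Z^H\big).
\]
Applying (\ref{l2}) to each of the two terms on the right and adding, the summands in which $X^H$ differentiates $G$ reassemble — after inserting $\Phi(A,B)=G(A,\phi B)$ and the identity $\Phi(\phi A,\phi B)=-\Phi(A,B)$ on horizontal fields (which follows from (\ref{com}), (\ref{com20}) and Theorem \ref{feri1}) — into $-d\Phi(X^H,Y^H,Z^H)-d\Phi(X^H,\phi Y^H,\phi Z^H)$, up to correction terms carrying a derivative of $\eta^H$. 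Those corrections, rewritten with $2\,d\eta^H(A,B)=A\,\eta^H(B)-B\,\eta^H(A)-\eta^H([A,B])$ and with $G(X^H,\xi^H)=\eta^H(X^H)=\eta(X^H)$, are exactly the last two summands $d\eta^H(\phi Y^H,X^H)\eta(Z^H)$ and $-d\eta^H(\phi Z^H,X^H)\eta(Y^H)$ of (\ref{mohi}).

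The core of the argument is the recombination of the remaining bracket terms, which at this stage form a fixed linear combination of $G([\phi Y^H,Z^H],\phi X^H)$, $G([Y^H,\phi Z^H],\phi X^H)$, $G([\phi Y^H,\phi Z^H],X^H)$ and $G([Y^H,Z^H],X^H)$. Using $\phi^2=I-\eta^H\otimes\xi^H-\eta^V\otimes\xi^V$ from (\ref{C1}) to replace $X^H$ by $\phi^2X^H+\eta^H(X^H)\xi^H$ in the last pairing, these four terms collect into $-G\big(N_\phi(Y^H,Z^H),\phi X^H\big)$ plus further $\eta^H$-corrections. Completing $N_\phi$ to $N^{(1)}$ costs exactly $G\big(d\eta^H(Y^H,Z^H)\xi^H+d\eta^V(Y^H,Z^H)\xi^V,\ \phi X^H\big)$, and this vanishes: $\phi X^H$ is horizontal, so $G(\xi^V,\phi X^H)=0$ trivially, while $G(\xi^H,\phi X^H)=\eta^H(\phi X^H)=0$ by Theorem \ref{feri1}. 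The surviving $\eta^H$-corrections, reorganised through the definition of $\pounds^H_{\phi\cdot}\eta^H$, are precisely $N^{(2)}(Y^H,Z^H)\eta(X^H)$. Assembling the coefficients gives (\ref{mohi}), and (\ref{mohi1}) follows verbatim upon exchanging the rôles of $H$ and $V$ throughout, using (\ref{l20}) in place of (\ref{l2}) and (\ref{com1}) in place of (\ref{com}).

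I expect the only real obstacle to be combinatorial: tracking the eight or so bracket terms, each with its sign, and verifying that they reorganise into exactly one copy of $-G(N^{(1)}(Y^H,Z^H),\phi X^H)$, one copy of $N^{(2)}(Y^H,Z^H)\eta(X^H)$ and the two $d\eta^H$-terms, with no residue. The two spots where slips are easiest are the sign bookkeeping forced by the paracontact conventions (the minus sign in (\ref{com}) and the $-\Phi(A,B)$ above, which differ from the contact case), and the clean separation of horizontal and vertical parts of brackets when invoking (\ref{l2}).
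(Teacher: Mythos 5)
Your plan is essentially the paper's own proof run in the opposite direction: the paper expands the right-hand side of (\ref{mohi}) using the intrinsic formulas for $d\Phi$, $N^{(1)}$, $N^{(2)}$ and $d\eta^H$, and then identifies the resulting derivative-and-bracket expression with $2G((D_{X^H}\phi)Y^H,Z^H)$ via the Koszul identity (\ref{l2}) together with metricity and the (skew-)adjointness of $\phi$ coming from (\ref{com}) --- exactly the ingredients you list, so the approach is the same. One caveat: the paper's exterior-derivative convention carries no factor $\tfrac12$ (it uses $d\eta^H(A,B)=A\,\eta^H(B)-B\,\eta^H(A)-\eta^H([A,B])$, and the analogous unnormalized formula for $d\Phi$), so your normalization ``$2\,d\eta^H(A,B)=\dots$'' must be dropped, otherwise the $d\eta^H$-terms in (\ref{mohi}) come out with the wrong coefficient.
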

\begin{proof}
By a simple calculation, we get
\begin{eqnarray*}
d\Phi(X^H,\phi Y^H,\phi Z^H)=\!\!\!\!&-&\!\!\!\! X^H(\Phi(Y^H,Z^H))-\phi Y^H(g(Z^H,X^H))\\
\!\!\!\!&+&\!\!\!\! \phi Y^H(\eta(Z^H)\eta(X^H))+\phi Z^H(G(X^H,Y^H))\\
\!\!\!\!&-&\!\!\!\! \phi Z^H(\eta(X^H)\eta(Y^H))-G([X^H,\phi Y^H],Z^H)\\
\!\!\!\!&+&\!\!\!\! \eta^H([X^H,\phi Y^H])\eta(Z^H)-G([\phi Z^H,X^H],Y^H)\\
\!\!\!\!&+&\!\!\!\! \eta^H([\phi Z^H,X^H]\eta(Y^H)-\Phi([\phi Y^H,\phi Z^H],X^H).
\end{eqnarray*}
Also we have
\begin{align*}
G(N^{(1)}(Y^H,Z^H),\phi X^H)&=\Phi([Y^H,Z^H],X^H)+\Phi([\phi Y^H,\phi Z^H],X^H)\\
&\ \ +G([\phi Y^H,Z^H],X^H)-\eta^H([\phi Y^H,Z^H])\eta(X^H)\\
&\ \ +G([Y^H,\phi Z^H],X^H)-\eta^H([Y^H,\phi Z^H])\eta(X^H),
\end{align*}
Moreover the following hold
\begin{eqnarray*}
d\eta^H(\phi Y^H,X^H)\eta(Z^H)\!\!\!\!&=&\!\!\!\! \phi Y^H(\eta(X^H))\eta(Z^H)-\eta^H([\phi Y^H,X^H])\eta(Z^H),\\
d\eta^H(\phi Z^H,X^H)\eta(Y^H)\!\!\!\!&=&\!\!\!\!\phi Z^H(\eta(X^H))\eta(Y^H)-\eta^H([\phi Z^H,X^H])\eta(Y^H).
\end{eqnarray*}
If we denote righthand side of (\ref{mohi}) by $I$, then by using the above equations we can obtain the following
\begin{eqnarray}
\nonumber I=\!\!\!\!&&\!\!\!\!\phi Y^H(G(Z^H,X^H))-\phi Z^H(G(X^H,Y^H))+G([X^H,\phi Y^H],Z^H)\\
\nonumber \!\!\!\!&+&\!\!\!\!G([\phi Z^H,X^H],Y^H)-Y^H(\Phi(Z^H,X^H))-Z^H(\Phi(X^H,Y^H))\\
\nonumber \!\!\!\!&+&\!\!\!\!\Phi([X^H,Y^H],Z^H)+\Phi([Z^H,X^H],Y^H)-G([\phi Y^H,Z^H],X^H)\\
\!\!\!\!&-&\!\!\!\!G([Y^H,\phi Z^H],X^H).\label{o5}
\end{eqnarray}
Since $D$ is a Finsler connection, then it is $G$-compatible and it's $(h)h$-torsion vanishes. Thus (\ref{o5}) reduces to following
\begin{equation}\label{Fins2}
I=G((D_{X^H}\phi)Y^H, Z^H)-G(D_{X^H}Z^H, \phi Y^H)-G(D_{X^H}\phi Z^H, Y^H).
\end{equation}
On the other hand,  we have
\begin{align}
X^HG(Z^H, \phi Y^H)&=G(D_{X^H}Z^H, \phi Y^H)+G(Z^H, D_{X^H}\phi Y^H),\label{Fins}\\
X^HG(\phi Z^H, Y^H)&=G(D_{X^H}\phi Z^H, Y^H)+G(\phi Z^H, D_{X^H}Y^H).\label{Fins1}
\end{align}
Since $G(Z^H, \phi Y^H)=G(\phi Z^H, Y^H)$, then by (\ref{Fins}) and (\ref{Fins1}) we get
\be
G(D_{X^H}Z^H, \phi Y^H)+G(D_{X^H}\phi Z^H, Y^H)=-G((D_{X^H}\phi)Y^H, Z^H).\label{o6}
\ee
Plugging (\ref{o6}) in (\ref{Fins2}) give us (\ref{mohi}). Similarly we can obtain (\ref{mohi1}).
\end{proof}

\bigskip

\begin{proposition}\label{hasan}
Let $(\phi,\eta,\xi,G)$ be a paracontact metric Finsler structure on $E$. Then the following hold
\begin{align}
2G((D_{_{X^H}} \phi)Y^H,Z^H)&=-G( N^{(1)}(Y^H,Z^H),\phi X^H)+d\eta^H(\phi Y^H,X^H)\eta(Z^H)\nonumber\\
&\ \ \ -d\eta^H(\phi Z^H,X^H)\eta(Y^H),\label{shari1}\\
2G((D_{_{X^V}} \phi)Y^V,Z^V)&=-G( N^{(1)}(Y^V,Z^V),\phi X^V)+d\eta^V(\phi Y^V,X^V)\eta(Z^V)\nonumber\\
&\ \ \ -d\eta^V(\phi Z^V,X^V)\eta(Y^V).\label{shari11}
\end{align}
Moreover we get $D_{_\xi}\phi=0$.
\end{proposition}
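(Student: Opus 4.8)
The plan is to obtain (\ref{shari1}) and (\ref{shari11}) as immediate specializations of Proposition \ref{esi1}, and then to read off $D_\xi\phi=0$ from them.

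First I would record what the paracontact metric hypothesis buys. By Theorem \ref{hasan1}, $N^{(2)}=0$, so the term $N^{(2)}(Y^H,Z^H)\eta(X^H)$ in (\ref{mohi}), and its counterpart in (\ref{mohi1}), vanish. Next, the defining relation (\ref{com2}) together with (\ref{form}) yields $d\eta(X,Y)=G(X,\phi Y)=\Phi(X,Y)$ for all $X,Y\in\chi(E)$ — the mixed components of both sides being zero — so the fundamental $2$-form is exact, $\Phi=d\eta$, hence closed, $d\Phi=0$. Therefore the two terms $-d\Phi(X^H,\phi Y^H,\phi Z^H)$ and $-d\Phi(X^H,Y^H,Z^H)$ in (\ref{mohi}), and the analogous terms in (\ref{mohi1}), drop out as well. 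What survives of (\ref{mohi}) is precisely (\ref{shari1}), and of (\ref{mohi1}) precisely (\ref{shari11}).

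For the last assertion I would substitute $X^H=\xi^H$ in (\ref{shari1}). Since $\phi\xi^H=0$ by Theorem \ref{feri1}, the term $-G(N^{(1)}(Y^H,Z^H),\phi\xi^H)$ is zero. For the remaining two terms, $d\eta^H(\phi Y^H,\xi^H)=\Phi(\phi Y^H,\xi^H)=G(\phi Y^H,\phi\xi^H)=0$ by (\ref{com2}) and Theorem \ref{feri1} (equivalently, $i_{\xi^H}d\eta^H=0$, as established in the proof of Theorem \ref{hasan1}), and likewise $d\eta^H(\phi Z^H,\xi^H)=0$. Hence $G((D_{\xi^H}\phi)Y^H,Z^H)=0$ for every horizontal $Z^H$; since $(D_{\xi^H}\phi)Y^H$ is again horizontal (a d-connection preserves $HE$) and $G^H$ is non-degenerate on $H_uE$, this forces $(D_{\xi^H}\phi)Y^H=0$. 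Running the same argument with (\ref{shari11}), $X^V=\xi^V$, $\phi\xi^V=0$ and non-degeneracy of $G^V$ on $V_uE$, gives $(D_{\xi^V}\phi)Y^V=0$, and combining the two we conclude $D_\xi\phi=0$.

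I do not anticipate a genuine obstacle: granting Proposition \ref{esi1}, the argument is essentially bookkeeping. The two points that actually use structure rather than mere substitution are the identification $\Phi=d\eta$ (hence $d\Phi=0$), which is exactly where the paracontact metric condition (\ref{com2}) enters, and, in the final step, checking that every evaluation term on the right-hand sides of (\ref{shari1})--(\ref{shari11}) is annihilated by $\phi\xi=0$ before invoking the non-degeneracy of $G^H$ and $G^V$.
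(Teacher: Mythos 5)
Your proposal is correct and takes essentially the same route as the paper: it specializes Proposition \ref{esi1} using $N^{(2)}=0$ (Theorem \ref{hasan1}) and $\Phi=d\eta$ (hence $d\Phi=0$) to get (\ref{shari1})--(\ref{shari11}), then plugs in $X=\xi^H$ (resp.\ $X=\xi^V$), uses $\phi\xi^H=\phi\xi^V=0$ and the non-degeneracy of $G^H$, $G^V$ to conclude $D_\xi\phi=0$. The only cosmetic difference is that the paper derives $d\eta^H(\phi Y^H,\xi^H)=0$ from $N^{(2)}=0$, while you read it off directly from $d\eta^H(\phi Y^H,\xi^H)=\Phi(\phi Y^H,\xi^H)=G(\phi Y^H,\phi\xi^H)=0$, which is equally valid.
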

\begin{proof}
By Proposition \ref{esi1}, we can get (\ref{shari1}), (\ref{shari11}). Thus we  prove $D_\xi\phi=0$ . By $N^{(2)}=0$,  we obtain $d\eta^H(\phi X^H, \xi^H)=0$. Thus by plugging  $X=\xi^H$ in (\ref{shari1}) we get the following
\[
G((D_{\xi^H}\phi)Y^H, Z^H)=0,
\]
which gives us
\[
G^H((D_{\xi^H}\phi)Y^H, Z)=0.
\]
We have also,  $G^H((D_{\xi^H}\phi)Y^V, Z)=0$. Therefore we obtain
\[
G^H((D_{\xi^H}\phi)Y, Z)=0.
\]
It means that $D_{\xi^H}\phi=0$. Similarly, we get $D_{\xi^V}\phi=0$. Therefore $D_{\xi}\phi=0$.
\end{proof}

\bigskip

Using Theorem \ref{hasan1},  we conclude the following.
\begin{theorem}\label{kpara}
Let $ (\phi,\eta,\xi,G) $ is a Paracontact metric Finsler structure on $E$. Then this structure is a K-paracontact structure if and only if $N^{(3)}=0 $.
\end{theorem}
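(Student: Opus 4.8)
The plan is to deduce Theorem~\ref{kpara} directly from Theorem~\ref{hasan1} without any new computation. Recall that a paracontact metric Finsler structure $(\phi,\eta,\xi,G)$ is, by definition, a $K$-paracontact Finsler structure precisely when $\xi^H$ and $\xi^V$ are Killing vector fields with respect to $G^H$ and $G^V$, respectively; that is, when $\pounds^H_\xi G^H = 0$ and $\pounds^V_\xi G^V = 0$. So the whole content of the statement is to identify this Killing condition with the vanishing of $N^{(3)}$.

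The argument runs as follows. First I would invoke the second part of Theorem~\ref{hasan1}: for a paracontact metric Finsler structure we already know $N^{(2)} = N^{(4)} = 0$, and moreover $N^{(3)} = 0$ \emph{if and only if} $\xi^H$ and $\xi^V$ are Killing with respect to $G^H$ and $G^V$. Now simply chain the two equivalences: ``$(\phi,\eta,\xi,G)$ is $K$-paracontact'' $\iff$ ``$\xi^H,\xi^V$ are Killing w.r.t. $G^H,G^V$'' (definition) $\iff$ ``$N^{(3)} = 0$'' (Theorem~\ref{hasan1}). This yields the claim.

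\begin{proof}
By definition, a paracontact metric Finsler structure $(\phi,\eta,\xi,G)$ on $E$ is a $K$-paracontact Finsler structure if and only if $\xi^H$ and $\xi^V$ are Killing vector fields with respect to $G^H$ and $G^V$, respectively, i.e. $\pounds^H_\xi G^H = 0$ and $\pounds^V_\xi G^V = 0$. On the other hand, by Theorem~\ref{hasan1}, for a paracontact metric Finsler structure we have $N^{(3)} = 0$ if and only if $\xi^H$ and $\xi^V$ are Killing vector fields with respect to $G^H$ and $G^V$, respectively. Combining these two equivalences, we conclude that $(\phi,\eta,\xi,G)$ is a $K$-paracontact Finsler structure if and only if $N^{(3)} = 0$.
\end{proof}

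There is essentially no obstacle here: the theorem is a bookkeeping corollary of Theorem~\ref{hasan1}, and all the analytic work (the Lie-derivative manipulations in equations~(\ref{God1})--(\ref{God11}) showing the Killing property is equivalent to $N^{(3)}=0$) has already been carried out in the proof of that theorem. The only thing to be careful about is making explicit that the definition of $K$-paracontact Finsler structure is exactly the Killing condition on $\xi^H$ and $\xi^V$, so that the ``if and only if'' of Theorem~\ref{hasan1} transfers verbatim; no separate verification is needed because a $K$-paracontact structure is, a fortiori, a paracontact metric structure, so the hypotheses of Theorem~\ref{hasan1} are in force throughout.
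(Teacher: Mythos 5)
Your proof is correct and matches the paper exactly: the paper itself derives Theorem \ref{kpara} by simply invoking Theorem \ref{hasan1} ("Using Theorem \ref{hasan1}, we conclude the following"), since the $K$-paracontact condition is by definition the Killing property of $\xi^H$ and $\xi^V$, which Theorem \ref{hasan1} shows is equivalent to $N^{(3)}=0$ for a paracontact metric Finsler structure. No further comment is needed.
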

Since a para-Sasakian Finsler structure is normal, then we have $N^{(3)}=0$. Thus from the above proposition we deduce the following
\begin{cor}\label{kpara1}
A para-Sasakian structure on $E$ is $K$-paracontact structure.
\end{cor}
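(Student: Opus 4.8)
The plan is to derive this directly from the chain of results just established. A para-Sasakian Finsler structure is, by definition, a paracontact metric Finsler structure that is in addition normal, i.e. $N^{(1)} = 0$. So the first step is to invoke the earlier theorem stating that the vanishing of $N^{(1)}$ forces the vanishing of $N^{(2)}$, $N^{(3)}$ and $N^{(4)}$; in particular we obtain $N^{(3)} = 0$.

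Next I would simply apply Theorem \ref{kpara}: since $(\phi,\eta,\xi,G)$ is a paracontact metric Finsler structure with $N^{(3)} = 0$, that theorem tells us precisely that the structure is $K$-paracontact. This is the entire content of the corollary, so the proof is a two-line composition of previously proved statements.

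Since the statement is a corollary, there is essentially no obstacle: the only point requiring a word of care is making explicit that "normal" in the definition of para-Sasakian is exactly the hypothesis $N^{(1)}=0$, so that the implication $N^{(1)} = 0 \Rightarrow N^{(3)} = 0$ applies. Concretely I would write: by definition a para-Sasakian Finsler structure is normal, hence $N^{(1)} = 0$, hence by the theorem on equivalent normality conditions $N^{(3)} = 0$; then Theorem \ref{kpara} yields that the structure is $K$-paracontact, completing the proof.
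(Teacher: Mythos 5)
Your argument is correct and coincides with the paper's own reasoning: the paper likewise notes that para-Sasakian means normal, hence $N^{(3)}=0$ by the theorem on the vanishing of $N^{(2)}$, $N^{(3)}$, $N^{(4)}$, and then applies Theorem \ref{kpara}. Nothing further is needed.
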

\bigskip
Now, we are going to find some conditions under which a paracontact metric Finsler structure on  vector bundle $E$ reduces to a K-paracontact Finsler structure. More precisely, we prove the following theorem.

\begin{theorem}\label{p1}
Let $(\phi,\eta,\xi,G)$ be a paracontact metric Finsler structure on $ E$. Then this structure is a K-paracontact Finsler structure if and only if
\begin{equation}\label{feri}
\left\{
\begin{array}{cc}
(i)\ D^H_{X}\xi^H=-\dfrac{1}{2}\phi X^H, &(ii)\ G^H([\xi^H, X^V]^H, Y^H)=0,\\
(iii)\ D^V_{X}\xi^V=-\dfrac{1}{2}\phi X^V, &(iv)\ G^V([\xi^V, X^H]^V, Y^V)=0.
\end{array}
\right.
\end{equation}
\end{theorem}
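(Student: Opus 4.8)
The plan is to prove the equivalence by connecting the defining property of a $K$-paracontact Finsler structure (that $\xi^H$ and $\xi^V$ are Killing with respect to $G^H$ and $G^V$) with the covariant-derivative formulas $(i)$--$(iv)$, using the Finsler connection $D$ and the relation $d\eta(X,Y)=G(X,\phi Y)$ available for a paracontact metric Finsler structure. First I would record the two standard identities for a symmetric metrical $d$-connection: for horizontal arguments, since the $(h)h$-torsion vanishes, $T^H(X^H,Y^H)=0$ gives $D^H_X Y^H-D^H_Y X^H=[X^H,Y^H]^H$, and for the Lie-derivative side, $(\pounds^H_{\xi}G^H)(X^H,Y^H)=G^H(D^H_X\xi^H,Y^H)+G^H(X^H,D^H_Y\xi^H)$ because $D$ is $G$-compatible (this is the usual rewriting of the Koszul/Killing identity). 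Likewise $d\eta^H(X^H,Y^H)=(D^H_X\eta^H)(Y^H)-(D^H_Y\eta^H)(X^H)=G^H(D^H_X\xi^H,Y^H)-G^H(D^H_Y\xi^H,Y^H)$ using $G(X,\xi)=\eta(X)$ from (\ref{com20}). Combining these with the paracontact condition $d\eta^H(X^H,Y^H)=G^H(X^H,\phi Y^H)$ lets one isolate the symmetric and antisymmetric parts of the bilinear form $(X^H,Y^H)\mapsto G^H(D^H_X\xi^H,Y^H)$.

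Concretely, write $A(X^H,Y^H):=G^H(D^H_X\xi^H,Y^H)$. Its antisymmetric part is fixed once and for all by $d\eta^H$: $A(X^H,Y^H)-A(Y^H,X^H)=d\eta^H(X^H,Y^H)=G^H(X^H,\phi Y^H)=-G^H(\phi X^H,Y^H)$, so the antisymmetric part equals $-\tfrac12 G^H(\phi X^H,Y^H)$ after using $G^H(X^H,\phi Y^H)=-G^H(\phi X^H,Y^H)$. The symmetric part of $A$ is exactly $\tfrac12(\pounds^H_\xi G^H)(X^H,Y^H)$. Hence $\xi^H$ is Killing with respect to $G^H$ on horizontal vectors if and only if the symmetric part vanishes, i.e. if and only if $A(X^H,Y^H)=-\tfrac12 G^H(\phi X^H,Y^H)=G^H(-\tfrac12\phi X^H,Y^H)$ for all $Y^H$, which by non-degeneracy of $G^H$ is precisely $D^H_X\xi^H=-\tfrac12\phi X^H$, i.e. condition $(i)$. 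This is the heart of the argument; condition $(iii)$ follows by the identical computation with $V$ in place of $H$, using (\ref{com1}), (\ref{com2}) and (\ref{l20}).

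It remains to account for the \emph{mixed} terms in $\pounds^H_\xi G^H$ and $\pounds^V_\xi G^V$, which is where conditions $(ii)$ and $(iv)$ enter. By Theorem \ref{hasan1}, for a paracontact metric Finsler structure we already have $N^{(2)}=N^{(4)}=0$, and $\xi$ being Killing is equivalent to $N^{(3)}=0$; moreover in the proof of that theorem one shows $(\pounds^H_\xi G^H)(X^V,Y^V)=0$ automatically, and that $(\pounds^H_\xi G^H)(X^H,Y^H)=0$ and $(\pounds^H_\xi G^H)(X^H,Y^V)=-\eta^V(Y^V)G^H(X^H,[\xi^H,\xi^V])$ follow once $N^{(4)}=0$ and part (i) of (\ref{God5}) hold. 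So the only surviving obstruction to $\pounds^H_\xi G^H=0$ beyond $(i)$ is the vanishing of $G^H([\xi^H,X^V]^H,Y^H)$-type terms; one checks directly that $(\pounds^H_\xi G^H)(X^H,Y^V)=-G^H([\xi^H,X^H]^{?},\ldots)$ simplifies, via $D^H$-compatibility and $T^H(X^H,Y^V)=-D^V_YX^H-[X^H,Y^V]^H$, to a combination of $D^H_X\xi^H$ (handled by $(i)$) and $G^H([\xi^H,Y^V]^H,X^H)$. Thus $\pounds^H_\xi G^H=0$ on all of $\chi(E)$ iff $(i)$ and $(ii)$ hold, and symmetrically $\pounds^V_\xi G^V=0$ iff $(iii)$ and $(iv)$ hold; together with the definition of $K$-paracontact this gives the equivalence. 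The main obstacle I anticipate is the bookkeeping of these mixed horizontal/vertical Lie brackets and torsion terms — making sure that $[\xi^H,X^V]^H$ appears with the right coefficient and that no extra $d\eta$-mixed term (which vanishes by (\ref{com2})) is overlooked — rather than anything conceptually deep.
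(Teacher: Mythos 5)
Your proposal is correct and takes essentially the paper's own route: the paper likewise reads conditions (ii) and (iv) off the mixed components of $\pounds^H_\xi G^H$ and $\pounds^V_\xi G^V$, and obtains (i) and (iii) by exactly your symmetric/antisymmetric splitting of $A(X^H,Y^H)=G(D^H_X\xi^H,Y^H)$ (its equations (\ref{1}) and (\ref{7}) say that $A$ is antisymmetric under the Killing condition while its antisymmetrization equals $d\eta^H(X^H,Y^H)=G^H(X^H,\phi Y^H)$, which forces $D^H_X\xi^H=-\tfrac12\phi X^H$). One small cleanup: drop the appeal to (\ref{God5})--(\ref{God8}) from the proof of Theorem \ref{hasan1}, since those formulas presuppose $N^{(3)}=0$ (the very condition being characterized, so in the converse direction this would be circular); the mixed term is computed directly as $(\pounds^H_\xi G^H)(X^H,Y^V)=-G^H([\xi^H,Y^V]^H,X^H)$, with no $D^H_X\xi^H$ contribution at all, and this is precisely condition (ii), as in the paper's first step.
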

\begin{proof}
Let $(\phi,\eta,\xi,G)$ be a K-paracontact Finsler structure. Then the following holds
\[
\pounds_\xi^HG^H=\pounds_\xi^VG^V=0.
\]
We have
\begin{align*}
0&=(\pounds_\xi^HG^H)(X^V, Y^H)=-G^H([\xi^H, X^V]^H, Y^H),\\
0&=(\pounds_\xi^VG^V)(X^H, Y^V)=-G^V([\xi^V, X^H]^V, Y^V),
\end{align*}
which give us (ii) and (iv) of (\ref{feri}).

It is easy to see that, the following holds
\[
(\pounds _{_\xi}^HG)(X^H,Y^H)=(\pounds _{_\xi}^HG^H)(X^H,Y^H).
\]
Therefore
\begin{align}\label{l}
0&=(\pounds _{_\xi}^HG)(X^H,Y^H)=\pounds _{_\xi}^HG(X^H,Y^H)-G(\pounds _{_\xi}^HX^H,Y^H)-G(X^H,\pounds _{_\xi}^HY^H)\nonumber\\
&=\pounds _{_\xi}^HG(X^H,Y^H)-G([\xi^H, X^H]^H,Y^H)-G(X^H,[\xi^H,Y^H]^H).
\end{align}
Since $D$ is symmetric, then we have
\begin{equation}\label{l1}
[\xi^H, X^H]^H=D^H_\xi X^H-D^H_X\xi^H.
\end{equation}
Plugging (\ref{l1}) in (\ref{l}) yields that
\[
0=(D _{_\xi}^HG)(X^H,Y^H)+G(D_{_X}^H \xi^H,Y^H)+G(X^H,D_{_Y}^H \xi^H).
\]
Since $D$ is $G$-compatible, then $D _{_\xi}^HG=0$. Thus
\begin{equation}\label{1}
G(D_{_X}^H \xi^H,Y^H)=-G(X^H,D_{_Y}^H \xi^H).
\end{equation}
Similarly we get
\begin{equation}\label{6}
G(D_{_X}^V \xi^V,Y^V)=-G(X^V,D_{_Y}^V \xi^V).
\end{equation}
Using (\ref{l2}), we obtain
\begin{equation}\label{7}
2G(D_{_X}^H\xi^H,Y^H)-2G(X^H,D_{_Y}^H \xi^H)=2d \eta(X^H,Y^H).
\end{equation}
By  (\ref{1}) and (\ref{7}) we have
\begin{equation}\label{8}
2G(D_{_X}^H\xi^H,Y^H)-2G(X^H,D_{_Y}^H \xi^H)=4G(D_{_X}^H\xi^H,Y^H).
\end{equation}
(\ref{7}) and (\ref{8}) give us
$$2G(D_{_X}^H\xi^H,Y^H)=d \eta(X^H,Y^H)=G(X^H,\phi Y^H)=-G(\phi X^H,Y^H).$$
Hence
\[
D^H_{X}\xi^H=-\dfrac{1}{2}\phi X^H.
\]
Similarly by using (\ref{6}),  we can deduce that $D^V_{X}\xi^V=-\dfrac{1}{2}\phi X^V $.

Conversely, suppose that  (\ref{feri}) holds. Then from part (i) of  (\ref{feri}) we have
\begin{align*}
0&=(\pounds _{_\xi}^HG^H)(X^H,Y^H)\\
&=G(D_{_X}^H \xi^H,Y^H)+G(X^H,D_{_Y}^H \xi^H)\\
&=-\frac{1}{2}[G(\phi X^H,Y^H)+G(X^H,\phi Y^H)]=0.
\end{align*}
Also (ii) gives us
\[
(\pounds_{\xi}^HG^H)(X^V,Y^H)=0.
\]
Therefore by considering
\[
(\pounds _{\xi}^HG^H)(X^V,Y^V)=0,
\]
we deduce $\pounds_{\xi}^HG^H=0$. By a similar method, we can obtain $\pounds_{\xi}^VG^V=0$. This completes the proof.
\end{proof}

\bigskip

\begin{lemma}\label{4.7}
Let  $(\phi,\eta,\xi,G)$ be a K-paracontact Finsler structure on a vector bundle $E$. Then the following hold
\begin{align}
R(X^V, \xi^V)\xi^V&=-\frac{1}{4}(X^V-\eta^V(X^V)\xi^V),\label{babi}\\
R(X^H, \xi^H)\xi^H&=-\frac{1}{4}(X^H-\eta^H(X^H)\xi^H)-D^V_{[X^H, \xi^H]}\xi^H. \label{babi1}
\end{align}
\end{lemma}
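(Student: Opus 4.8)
The plan is to compute $R(X,\xi)\xi$ directly from the definition $R(X,Y)Z = D_XD_YZ - D_YD_XZ - D_{[X,Y]}Z$, specializing to $Y=Z=\xi$, and to exploit the structural identities we already have: from Theorem \ref{p1}, since a $K$-paracontact structure satisfies (\ref{feri}), we know $D^V_X\xi^V = -\tfrac12\phi X^V$ and $D^H_X\xi^H = -\tfrac12\phi X^H$; from Proposition \ref{hasan} we have $D_\xi\phi = 0$, hence $D^V_\xi\phi = 0$ and $D^H_\xi\phi = 0$; and of course $\phi\xi^H = \phi\xi^V = 0$ and $\phi^2 = I - \eta^H\otimes\xi^H - \eta^V\otimes\xi^V$ from Theorem \ref{feri1} and (\ref{C1}).

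For (\ref{babi}) I would work entirely in the vertical distribution. Write $R(X^V,\xi^V)\xi^V = D^V_XD^V_\xi\xi^V - D^V_\xi D^V_X\xi^V - D_{[X^V,\xi^V]}\xi^V$. The first term vanishes because $D^V_\xi\xi^V = -\tfrac12\phi\xi^V = 0$. For the second term, $D^V_X\xi^V = -\tfrac12\phi X^V$, so $D^V_\xi D^V_X\xi^V = -\tfrac12 D^V_\xi(\phi X^V) = -\tfrac12(D^V_\xi\phi)X^V - \tfrac12\phi(D^V_\xi X^V) = -\tfrac12\phi(D^V_\xi X^V)$ using $D^V_\xi\phi = 0$. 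For the third term, since $D$ is symmetric the $(v)v$-torsion vanishes, so $[X^V,\xi^V]^V = D^V_X\xi^V - D^V_\xi X^V = -\tfrac12\phi X^V - D^V_\xi X^V$; moreover the $(v)hv$-torsion relation and (iv) of (\ref{feri}) control the horizontal part $[X^V,\xi^V]^H$, and one checks $D_{[X^V,\xi^V]}\xi^V = D^V_{[X^V,\xi^V]^V}\xi^V + D^H_{[X^V,\xi^V]^H}\xi^V$; applying $D^V_{(\cdot)}\xi^V = -\tfrac12\phi(\cdot)$ to the vertical part gives $-\tfrac12\phi(-\tfrac12\phi X^V - D^V_\xi X^V) = \tfrac14\phi^2 X^V + \tfrac12\phi(D^V_\xi X^V)$. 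Assembling the three contributions, the $\phi(D^V_\xi X^V)$ terms cancel and one is left with $-\tfrac14\phi^2X^V = -\tfrac14(X^V - \eta^V(X^V)\xi^V)$, which is (\ref{babi}).

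For (\ref{babi1}) the computation is the same in spirit but now mixed horizontal/vertical terms survive because $\xi^H$ need not be $D$-parallel along vertical directions and the bracket $[X^H,\xi^H]$ generally has a vertical component. Expanding $R(X^H,\xi^H)\xi^H = D^H_XD^H_\xi\xi^H - D^H_\xi D^H_X\xi^H - D_{[X^H,\xi^H]}\xi^H$: the first term vanishes since $D^H_\xi\xi^H = -\tfrac12\phi\xi^H = 0$; the second gives $\tfrac12\phi(D^H_\xi X^H)$ by the same argument with $D^H_\xi\phi = 0$; the bracket term splits as $D^H_{[X^H,\xi^H]^H}\xi^H + D^V_{[X^H,\xi^H]^V}\xi^H$, where the horizontal piece, using the symmetry of $D$ ($[X^H,\xi^H]^H = D^H_X\xi^H - D^H_\xi X^H = -\tfrac12\phi X^H - D^H_\xi X^H$) together with $D^H_{(\cdot)}\xi^H = -\tfrac12\phi(\cdot)$, produces $\tfrac14\phi^2X^H + \tfrac12\phi(D^H_\xi X^H)$, while the vertical piece is exactly the leftover term $D^V_{[X^H,\xi^H]}\xi^H$ appearing on the right side of (\ref{babi1}). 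Collecting everything, the $\phi(D^H_\xi X^H)$ terms cancel and one obtains $-\tfrac14\phi^2X^H - D^V_{[X^H,\xi^H]^V}\xi^H = -\tfrac14(X^H - \eta^H(X^H)\xi^H) - D^V_{[X^H,\xi^H]}\xi^H$ (noting $D^V_{(\cdot)}\xi^H$ only sees the vertical argument), which is (\ref{babi1}).

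The main obstacle is bookkeeping rather than ideas: one must be careful about which torsion components vanish (only $(h)h$- and $(v)v$-torsion, by symmetry of the Finsler connection) and hence which bracket components are expressible via $D$, and one must correctly decompose $D_{[X,\xi]}\xi$ into its $h$- and $v$-covariant parts before applying the $K$-paracontact identities $D^H_\cdot\xi^H = -\tfrac12\phi(\cdot)$, $D^V_\cdot\xi^V = -\tfrac12\phi(\cdot)$. The appearance of the extra term $D^V_{[X^H,\xi^H]}\xi^H$ in (\ref{babi1}) but not in (\ref{babi}) is precisely the asymmetry coming from the fact that there is no analogue of $D^V_\cdot\xi^H = -\tfrac12\phi(\cdot)$ — part (i) of (\ref{feri}) only controls $D^H_X\xi^H$.
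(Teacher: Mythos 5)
Your proof is correct and follows essentially the same route as the paper: expand $R(\cdot,\xi)\xi$ via (\ref{Rie}), use $D^H_X\xi^H=-\tfrac12\phi X^H$, $D^V_X\xi^V=-\tfrac12\phi X^V$, $D_\xi\phi=0$, the vanishing of the $(h)h$- and $(v)v$-torsions, and $\phi^2=I-\eta^H\otimes\xi^H-\eta^V\otimes\xi^V$. One small correction: the term $D^H_{[X^V,\xi^V]^H}\xi^V$ in (\ref{babi}) drops out simply because $[X^V,\xi^V]^H=0$ (the vertical distribution is integrable, so brackets of vertical fields are vertical), which is exactly the fact the paper invokes --- not because of the $(v)hv$-torsion relation or part (iv) of (\ref{feri}), which concern mixed brackets and do not control $[X^V,\xi^V]^H$.
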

\begin{proof}
Using $[X^V, \xi^V]^H=0$,  $D_\xi\phi=0$ and  (\ref{Rie}), we obtain
\begin{eqnarray*}
R(X^V, \xi^V)\xi^V\!\!\!\!&=&\!\!\!\!\frac{1}{2}\phi(D_{\xi^V}X^V+[X^V, \xi^V]^V)\\
\!\!\!\!&=&\!\!\!\! -\frac{1}{4}\phi^2(X^V)\\
\!\!\!\!&=&\!\!\!\!-\frac{1}{4}\big[X^V-\eta^V(X^V)\xi^V\big].
\end{eqnarray*}
Similarly we have
\begin{align*}
R(X^H, \xi^H)\xi^H&=\frac{1}{2}\phi(D_{\xi^H}X^H+[X^H, \xi^H]^H)-D_{[X^H, \xi^H]^V}\xi^H\nonumber\\
&=-\frac{1}{4}\phi^2(X^H)-D_{[X^H, \xi^H]^V}\xi^H\nonumber\\
&=-\frac{1}{4}(X^H-\eta^H(X^H)\xi^H)-D^V_{[X^H, \xi^H]}\xi^H.
\end{align*}
This completes the proof.
\end{proof}

\bigskip

\begin{theorem}\label{chakhan}
Let $(\phi,\eta,\xi,G)$ be a  K-paracontact Finsler structure on $E$. Then the following hold
\begin{description}
  \item[(i)] the vertical flag curvature of all plane sections containing $\xi^V$ are equal to $-\frac{1}{4}$;
  \item[(ii)] the horizontal flag curvature of all plane sections containing $\xi^H$ are equal to $-\frac{1}{4}$ if and only if $G(D^V_{[X^H, \xi^H]}\xi^H, X^H)=0$.
\end{description}
\end{theorem}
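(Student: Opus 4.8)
The plan is to compute the flag curvatures directly from Lemma~\ref{4.7}, treating the vertical and horizontal cases separately. Recall that for a plane section spanned by a unit vector $\xi$ (here $\xi^V$ or $\xi^H$, which are unit by \eqref{com20}) and a vector $X$ orthogonal to $\xi$, the flag curvature has the form $K(X,\xi)=\dfrac{G(R(X,\xi)\xi,X)}{G(X,X)G(\xi,\xi)-G(X,\xi)^2}$, which for $G(X,\xi)=0$ reduces to $K(X,\xi)=\dfrac{G(R(X,\xi)\xi,X)}{G(X,X)}$.

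For part (i), I would take $X^V$ with $G^V(X^V,\xi^V)=\eta^V(X^V)=0$, so that \eqref{babi} gives $R(X^V,\xi^V)\xi^V=-\tfrac14 X^V$. Pairing with $X^V$ yields $G(R(X^V,\xi^V)\xi^V,X^V)=-\tfrac14 G^V(X^V,X^V)$, and dividing by $G^V(X^V,X^V)$ gives the vertical flag curvature $-\tfrac14$ for every plane section containing $\xi^V$. One only needs to note that for a plane containing $\xi^V$ one may always choose the second spanning vector to be $G^V$-orthogonal to $\xi^V$ (subtract the $\eta^V(X^V)\xi^V$ component), and that $\xi^V$ has unit length, so no further normalization is needed.

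For part (ii), the analogous computation starts from \eqref{babi1}: for $X^H$ with $\eta^H(X^H)=0$ one gets
\[
G(R(X^H,\xi^H)\xi^H,X^H)=-\tfrac14 G^H(X^H,X^H)-G(D^V_{[X^H,\xi^H]}\xi^H,X^H).
\]
Dividing by $G^H(X^H,X^H)$, the horizontal flag curvature of the plane spanned by $X^H,\xi^H$ equals $-\tfrac14$ precisely when the extra term $G(D^V_{[X^H,\xi^H]}\xi^H,X^H)$ vanishes; since this must hold for all such $X^H$, the stated iff follows. The main point to be careful about is the direction "if the flag curvatures all equal $-\tfrac14$ then $G(D^V_{[X^H,\xi^H]}\xi^H,X^H)=0$": one should observe that the identity above holds for \emph{every} horizontal $X^H$ orthogonal to $\xi^H$, and that $G(D^V_{[X^H,\xi^H]}\xi^H, \xi^H)$ contributes nothing extra (the term is already a d-tensor expression evaluated on $X^H$), so equality of curvatures to $-\tfrac14$ for all flags forces the correction term to vanish identically on the orthogonal complement of $\xi^H$, hence $G(D^V_{[X^H,\xi^H]}\xi^H,X^H)=0$ as claimed. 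I expect the only mild obstacle to be bookkeeping: making sure the orthogonality reductions via $\eta^H,\eta^V$ are applied consistently and that the flag-curvature denominator is handled correctly when $\xi$ is unit.
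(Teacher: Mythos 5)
Your proposal is correct and follows essentially the same route as the paper: take a unit $X^V$ (resp.\ $X^H$) orthogonal to $\xi^V$ (resp.\ $\xi^H$), apply Lemma~\ref{4.7} to get $R(X^V,\xi^V)\xi^V=-\tfrac14 X^V$ and the horizontal analogue with the correction term $-D^V_{[X^H,\xi^H]}\xi^H$, and pair with $X$ to read off the flag curvatures. Your extra remarks on normalizing the flag representative and on the denominator are just the bookkeeping the paper leaves implicit, so no substantive difference.
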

\begin{proof}
Let $X^V$ be a unit vector field orthogonal to $\xi^V$. Then
\[
\eta^V(X^V)=0.
\]
Consequently (\ref{babi}) gives us
\[
R(X^V, \xi^V)\xi^V=-\frac{1}{4}X^V.
\]
Therefore we get
\[
K(X^V, \xi^V)=G^V(R(X^V, \xi^V)\xi^V, X^V)=-\frac{1}{4}G(X^V, X^V)=-\frac{1}{4}.
\]
Similarly, if $X^H$ is a unit vector field orthogonal to $\xi^H$, then from (\ref{babi1}) we get
\begin{align*}
K(X^H, \xi^H)&=G^H(R(X^H, \xi^H)\xi^H, X^H)\\
&=-\frac{1}{4}G(X^H, X^H)-G(D_{[X^H, \xi^H]^V}\xi^H, X^H)\\
&=-\frac{1}{4}-G(D_{[X^H, \xi^H]^V}\xi^H, X^H).
\end{align*}
Therefore  $K(X^H, \xi^H)=-\frac{1}{4}$ holds if and only if $G(D_{[X^H, \xi^H]^V}\xi^H, X^H)=0$.
\end{proof}

\bigskip

Now, we are going to study some properties of   para-Sasakian Finsler structure on a vector bundle. First, we prove the following.

\smallskip

\begin{theorem}
Let $(\phi, \eta, \xi,G)$ be a para-Sasakian Finsler structure  on a  vector bundle $E$. Then the following relations hold
\begin{equation}\label{2}
(D_{_X}^H\phi)Y^H=\dfrac{1}{2}\{\eta^H(Y^H)X^H-G^H(X^H,Y^H)\xi^H\},
\end{equation}
\begin{equation}\label{3}
(D_{_X}^V\phi)Y^V=\dfrac{1}{2}\{\eta^V(Y^V)X^V-G^V(X^V,Y^V)\xi^V\},
\end{equation}
Moreover, the Riemannian curvature satisfies the following
\begin{align}
R(X^V,Y^V)\xi^V&=\dfrac{1}{4}\{\eta^V(X^V)Y^V-\eta^V(Y^V)X^V\},\label{salar1}\\
R(X^H,Y^H)\xi^H&=\dfrac{1}{4}\{\eta^H(X^H)Y^H-\eta^H(Y^H)X^H\}-D^V_{[X^H, Y^H]}\xi^H\label{salar2}.
\end{align}
\end{theorem}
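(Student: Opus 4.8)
The plan is to prove the two identities \eqref{2}–\eqref{3} first, and then derive the curvature formulas \eqref{salar1}–\eqref{salar2} as consequences. For \eqref{2}, I would start from Proposition \ref{hasan} (equation \eqref{shari1}), which expresses $2G((D_{X^H}\phi)Y^H,Z^H)$ in terms of $N^{(1)}$ and $d\eta^H$ terms. Since a para-Sasakian structure is by definition a \emph{normal} paracontact metric Finsler structure, we have $N^{(1)}=0$, so the first term drops out. For the remaining two terms, I would use the paracontact metric condition \eqref{com2}, namely $d\eta^H(X,Y)=\Phi(X^H,Y^H)=G^H(X,\phi Y)$, to rewrite $d\eta^H(\phi Y^H,X^H)=G^H(\phi Y^H,\phi X^H)$ and then apply the compatibility relation \eqref{com}, $G^H(\phi Y^H,\phi X^H)=-G^H(X^H,Y^H)+\eta^H(X^H)\eta^H(Y^H)$. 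Together with the analogous rewriting of $d\eta^H(\phi Z^H,X^H)$, and $\eta(Z^H)=\eta^H(Z^H)$, this should collapse the right-hand side to $G^H(\eta^H(Y^H)X^H-G^H(X^H,Y^H)\xi^H,Z^H)$ up to the factor $\tfrac12$ (the $\eta^H(X^H)\eta^H(Y^H)\eta^H(Z^H)$ cross-terms cancel). Since $Z^H$ is arbitrary horizontal and $G^H$ is nondegenerate on $H_uE$, \eqref{2} follows; \eqref{3} is obtained identically from \eqref{shari11}, \eqref{com1}, \eqref{com2}.

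For \eqref{salar1}, I would compute $R(X^V,Y^V)\xi^V=D^V_XD^V_Y\xi^V-D^V_YD^V_X\xi^V-D_{[X^V,Y^V]}\xi^V$ using the vertical analogue of Theorem \ref{p1}(iii), $D^V_X\xi^V=-\tfrac12\phi X^V$ (which holds since a para-Sasakian structure is $K$-paracontact by Corollary \ref{kpara1}, so Theorem \ref{p1} applies). Substituting, $D^V_XD^V_Y\xi^V=-\tfrac12 D^V_X(\phi Y^V)=-\tfrac12\big((D^V_X\phi)Y^V+\phi D^V_X Y^V\big)$, and using \eqref{3} to expand $(D^V_X\phi)Y^V$; the $\phi D^V_X Y^V$ terms combine with $D_{[X^V,Y^V]}\xi^V=-\tfrac12\phi[X^V,Y^V]^V$ to cancel (using torsion-freeness of $D$ on the vertical distribution, $D^V_XY^V-D^V_YX^V-[X^V,Y^V]^V=0$), leaving $-\tfrac12\cdot\tfrac12\{\eta^V(Y^V)X^V-G^V(X^V,Y^V)\xi^V\}$ minus its antisymmetrization in $X,Y$; the $G^V$-terms cancel by symmetry and what survives is $\tfrac14\{\eta^V(X^V)Y^V-\eta^V(Y^V)X^V\}$. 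For \eqref{salar2} the same computation with $D^H_X\xi^H=-\tfrac12\phi X^H$ runs in parallel, except that $[X^H,Y^H]$ has a nonzero vertical part, so $D_{[X^H,Y^H]}\xi^H$ contributes the extra term $D^V_{[X^H,Y^H]}\xi^H$ that does not cancel — this accounts for the correction term in \eqref{salar2} (and one uses that $D^H$ is symmetric so $D^H_XY^H-D^H_YX^H=[X^H,Y^H]^H$).

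The main obstacle I anticipate is purely bookkeeping in the curvature computation: carefully tracking which Lie-bracket components are horizontal versus vertical, and invoking the correct torsion-freeness relation ($(h)h$-torsion vanishes for the horizontal computation, $(v)v$-torsion for the vertical one) at the right moment so that all the $\phi D_X Y$ and $D_{[X,Y]}\xi$ terms cancel cleanly. A secondary technical point is ensuring that the identities \eqref{2}–\eqref{3}, proved only for horizontal (resp. vertical) arguments, suffice: since $\phi$ is a d-tensor and $D$ is a d-connection, $(D^H_X\phi)Y$ decomposes into the horizontal piece acting on $Y^H$ and a vertical piece, and one checks the vertical piece $(D^H_X\phi)Y^V$ plays no role here because $\xi^V$ is what is being differentiated in \eqref{salar2}; I would state this decomposition explicitly at the start to avoid ambiguity.
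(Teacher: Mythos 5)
Your proposal is correct and follows essentially the same route as the paper: identities \eqref{2}--\eqref{3} are obtained from Proposition \ref{esi1}/\ref{hasan} by killing the $N^{(1)}$, $N^{(2)}$ and $d\Phi$ terms via normality and $\Phi=d\eta$, then rewriting $d\eta^H(\phi Y^H,X^H)$ through \eqref{com2} and \eqref{com}, and the curvature formulas \eqref{salar1}--\eqref{salar2} follow by substituting $D^H_X\xi^H=-\tfrac12\phi X^H$, $D^V_X\xi^V=-\tfrac12\phi X^V$ (valid since para-Sasakian implies $K$-paracontact, Corollary \ref{kpara1} and Theorem \ref{p1}) into \eqref{Rie} and using torsion-freeness, with the vertical part of $[X^H,Y^H]$ producing the extra term $-D^V_{[X^H,Y^H]}\xi^H$. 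Your bookkeeping of the bracket components and the cancellation of the $\eta\eta\eta$ cross-terms matches the paper's computation, so no gap to report.
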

\begin{proof}
Since $(\phi, \eta, \xi,G)$ is para-Sasakian Finsler structure, then $ \Phi=d \eta $ and $ N^{(1)}=N^{(2)}=0 $. Thus by (\ref{mohi}),  we obtain
\begin{align*}
2G((D^H_X\phi)Y^H, Z^H)&=d\eta^H(\phi Y^H, X^H)\eta(Z^H)-d\eta^H(\phi Z^H, X^H)\eta(Y^H)\\
&=G(\phi Y^H, \phi X^H)\eta(Z^H)-G(\phi Z^H, \phi X^H)\eta(Y^H)\\
&=-G(X^H, Y^H)\eta(Z^H)+G(X^H, Z^H)G(\xi^H, Y^H)\\
&=G(\eta(Y^H)X^H-G(X^H, Y^H)\xi^H, Z^H).
\end{align*}
This implies (\ref{2}). With similar computations, one can obtain (\ref{3}).

Using (\ref{Rie}), Theorem \ref{kpara} and Corollary \ref{kpara1} we have
\begin{eqnarray}
\nonumber R(X^V,Y^V)\xi^V\!\!\!\!&=&\!\!\!\!D_X^VD_Y^V \xi^V-D_Y^VD_X^V\xi^V-D_{[X^V,Y^V]}^V\xi^V\\
\nonumber  \!\!\!\!&=&\!\!\!\!D_{_X}^V(-\dfrac{1}{2}\phi Y^V)-D_{_Y}^V(-\dfrac{1}{2}\phi X^V)+\dfrac{1}{2}\phi[X^V,Y^V]^V\\
\!\!\!\!&=&\!\!\!\!-\dfrac{1}{2}(D_X^V\phi)Y^V+\dfrac{1}{2}(D_Y^V\phi)X^V.\label{o7}
\end{eqnarray}
By  (\ref{3}) and   (\ref{o7}) we get
\[
R(X^V,Y^V)\xi^V=\dfrac{1}{4}\{\eta^V(X^V)Y^V-\eta^V(Y^V)X^V\}.
\]
Similarly by using (\ref{2}), we obtain
\begin{eqnarray*}
R(X^H,Y^H)\xi^H\!\!\!\!&=&\!\!\!\!D_X^HD_Y^H \xi^H-D_Y^HD_X^H\xi^H-D_{[X^H,Y^H]}^H\xi^H-D_{[X^H,Y^H]}^V\xi^H\\
\!\!\!\!&=&\!\!\!\!D_{_X}^H(-\dfrac{1}{2}\phi Y^H)-D_{_Y}^H(-\dfrac{1}{2}\phi X^H)+\dfrac{1}{2}\phi[X^H,Y^H]^H-D_{[X^H,Y^H]}^V\xi^H\\
\!\!\!\!&=&\!\!\!\!-\dfrac{1}{2}(D_X^H\phi)Y^H+\dfrac{1}{2}(D_Y^H\phi)X^H-D_{[X^H,Y^H]}^V\xi^H\\
\!\!\!\!&=&\!\!\!\! \dfrac{1}{4}\{\eta^H(X^H)Y^H-\eta^H(Y^H)X^H\}-D_{[X^H,Y^H]}^V\xi^H.
\end{eqnarray*}
This completes the proof.
\end{proof}

\bigskip

A plane section in $V_uE$ is called a vertical $\phi$-section if there exist a unit vector $X^V$ in $V_uE$ orthogonal to $\xi^V$ such that $\{X^V, \phi X^V\}$ span the section. The vertical flag curvature $K(X^V, \phi X^V)$ is called the \textit{vertical $\phi$-flag curvature.}

\smallskip

\begin{proposition}
Let $ (\phi, \eta, \xi,G) $ be a para-Sasakian Finsler structure on $E$. Suppose that $E$ is locally symmetric. Then it has vertical $\phi$-flag curvature $-\frac{1}{4}$.
\end{proposition}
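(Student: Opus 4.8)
The plan is to combine three ingredients established earlier: the curvature formula $R(X^V,Y^V)\xi^V=\frac14\{\eta^V(X^V)Y^V-\eta^V(Y^V)X^V\}$ from the previous theorem, the para-Sasakian identity $(D^V_X\phi)Y^V=\frac12\{\eta^V(Y^V)X^V-G^V(X^V,Y^V)\xi^V\}$, and the local symmetry hypothesis $D R=0$. Fix a unit vector $X^V\in V_uE$ with $\eta^V(X^V)=0$, so $G^V(X^V,X^V)=\pm1$ and, using the compatibility relation (\ref{com1}), $\phi X^V$ is also orthogonal to $\xi^V$ with $G^V(\phi X^V,\phi X^V)=-G^V(X^V,X^V)$. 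The vertical $\phi$-flag curvature is $K(X^V,\phi X^V)=G^V(R(X^V,\phi X^V)\phi X^V, X^V)$ up to the appropriate normalization by the area of the flag; I will carry the normalizing denominator along and show it cancels.

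First I would differentiate the relation $R(Y^V,\xi^V)\xi^V=-\frac14(Y^V-\eta^V(Y^V)\xi^V)$ (which is (\ref{babi}) specialized, or read off from (\ref{salar1})) covariantly in a vertical direction $X^V$. Local symmetry gives $(D^V_X R)(Y^V,\xi^V)\xi^V=0$, so expanding the left side by the Leibniz rule produces terms involving $D^V_X\xi^V=-\frac12\phi X^V$ (from Theorem~\ref{p1}(iii)) and $D^V_X\eta^V$; the right side differentiates to $-\frac14(D^V_X Y^V - (D^V_X\eta^V)(Y^V)\xi^V-\eta^V(Y^V)D^V_X\xi^V)$. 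Cancelling the $D^V_X Y^V$ contributions and reorganizing, this yields an identity expressing $R(Y^V,\xi^V)\phi X^V + R(Y^V,\phi X^V)\xi^V$ (and similar) in terms of $\phi$, $\eta^V$, $G^V$ and $\xi^V$ alone. Feeding $Y^V=\phi X^V$ into this, together with $\phi^2 X^V=X^V-\eta^V(X^V)\xi^V=X^V$ and $\eta^V\circ\phi=0$ from Theorem~\ref{feri1}, should pin down $R(X^V,\phi X^V)\xi^V$ completely, and then the first Bianchi identity for the metrical $d$-connection (which holds since the $(v)v$-torsion vanishes) converts this into the desired value of $R(X^V,\phi X^V)\phi X^V$ paired against $X^V$.

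The main obstacle I anticipate is bookkeeping of the auxiliary tensor $D^V_X\eta^V$ and of the Bianchi/symmetry relations among the mixed curvature terms: the computation of $(D^V_X R)(Y^V,\xi^V)\xi^V=0$ generates several curvature evaluations with $\xi^V$ in varying slots, and one must use the skew-symmetry $R(U,V)=-R(V,U)$, the vertical first Bianchi identity, and $D^V_\xi\phi=0$ (Proposition~\ref{hasan}) repeatedly to collapse them. A cleaner alternative, which I would try first, is to bypass differentiating $R$ directly: instead differentiate the endomorphism identity $R(Y^V,\xi^V)=-\frac14(\mathrm{Id}-\eta^V\otimes\xi^V)\circ(\text{restricted to }VE)$ as a $(1,1)$-tensor equation, so that $D^V_X$ of the left side is $(D^V_X R)(\cdot,\xi^V)+R(\cdot,D^V_X\xi^V)=R(\cdot,-\tfrac12\phi X^V)$ by local symmetry, while $D^V_X$ of the right side is computed from (\ref{3}). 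Evaluating both sides on $X^V$ (with $\eta^V(X^V)=0$) immediately gives $R(X^V,\phi X^V)=$ (a multiple of $\mathrm{Id}-\eta^V\otimes\xi^V$), and contracting with $G^V(\,\cdot\, , X^V)$ after applying it to $\phi X^V$ delivers $K(X^V,\phi X^V)=-\frac14$. The routine part — verifying the constant is exactly $-\frac14$ and that the flag-normalization cancels because $G^V(\phi X^V,\phi X^V)G^V(X^V,X^V)-G^V(X^V,\phi X^V)^2=-1$ — I would leave to a short computation.
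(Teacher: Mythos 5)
Your overall strategy (exploit local symmetry together with $D^V_X\xi^V=-\tfrac12\phi X^V$, the formula for $(D^V_X\phi)Y^V$ and the curvature identities involving $\xi^V$) is the right family of argument, but the identity you choose to differentiate cannot reach the quantity you need, and this is a genuine gap. If you differentiate the Jacobi-type relation $R(Y^V,\xi^V)\xi^V=-\tfrac14\big(Y^V-\eta^V(Y^V)\xi^V\big)$ in a vertical direction, the Leibniz terms are $R(Y^V,D^V_X\xi^V)\xi^V$ and $R(Y^V,\xi^V)D^V_X\xi^V$; after re-using (\ref{salar1}) the only new information is an expression for $R(Y^V,\xi^V)\phi X^V$ proportional to $G(\phi X^V,Y^V)\xi^V$. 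Every curvature component obtainable this way still carries $\xi^V$ in at least one slot, and neither skew-symmetry nor the first Bianchi identity (which merely permutes the slots of such components, so $\xi^V$ never disappears) can convert them into $G(R(X^V,\phi X^V)\phi X^V,X^V)$, whose four entries all lie in $\mathrm{span}\{X^V,\phi X^V\}$ and contain no $\xi^V$ at all. Your ``cleaner alternative'' has the same defect: the operator identity $R(Y^V,\xi^V)=-\tfrac14(\mathrm{Id}-\eta^V\otimes\xi^V)$ is not available (only its contraction with $\xi^V$, Lemma \ref{4.7}, is established), and if you read it as the Jacobi operator the differentiated identity again never leaves the $\xi$-anchored components, so the asserted conclusion that $R(X^V,\phi X^V)$ is a multiple of $\mathrm{Id}-\eta^V\otimes\xi^V$ does not follow.

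The repair is to differentiate the identity containing only one $\xi$, namely (\ref{salar1}), $R(X^V,Y^V)\xi^V=\tfrac14\{\eta^V(X^V)Y^V-\eta^V(Y^V)X^V\}$, because then the Leibniz term $R(X^V,Y^V)D^V_W\xi^V=-\tfrac12 R(X^V,Y^V)\phi W^V$ is a $\xi$-free curvature term; this is in substance what the paper's equation (\ref{Hi}) records, with the derivative taken in the direction $\phi X^V$. Concretely, local symmetry gives $-\tfrac12 R(A^V,B^V)\phi W^V=\tfrac14\{(D^V_W\eta^V)(A^V)B^V-(D^V_W\eta^V)(B^V)A^V\}$, and since $(D^V_W\eta^V)(A^V)=G(A^V,D^V_W\xi^V)=-\tfrac12 G(\phi W^V,A^V)$ one obtains $R(A^V,B^V)\phi W^V=\tfrac14\{G(\phi W^V,A^V)B^V-G(\phi W^V,B^V)A^V\}$. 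Taking $A^V=X^V$, $B^V=\phi X^V$, $W^V=X^V$ with $\eta^V(X^V)=0$ yields $R(X^V,\phi X^V)\phi X^V=-\tfrac14 G^V(\phi X^V,\phi X^V)X^V$, and since $G^V(\phi X^V,\phi X^V)=-G^V(X^V,X^V)$ by (\ref{com1}), the normalization cancels and $K(X^V,\phi X^V)=-\tfrac14$, exactly the paper's conclusion.
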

\begin{proof}
Let $X^V\neq 0$ be a vector field on $E$ orthogonal to $\xi^V$. Then we have $\eta^V(X^V)=G^V(X^V,\eta^V)=0$. By direct conclusion we obtain
\begin{eqnarray}\label{Hi}
\nonumber(D_{\phi X^V}R)(X^V, \phi X^V)\xi^V=\frac{1}{2}\!\!\!\!&\Big[&\!\!\!\! \phi R(X^V, \phi X^V)\phi X^V- \frac{1}{4}G^V(X^V, \phi X^V)\phi^2X^V\\
\!\!\!\!&+&\!\!\!\!\frac{1}{4}G^V(\phi X^V, \phi X^V)\phi X^V\Big].
\end{eqnarray}
Considering $G(X^V, \phi X^V)=-G(\phi X^V, X^V)$, we have $G(X^V, \phi X^V)=0$. Using this equation and noting that $E$ is locally symmetric, (\ref{Hi}) gives us
\be\label{o8}
\phi R(X^V, \phi X^V)\phi X^V+\frac{1}{4}G^V(\phi X^V, \phi X^V)\phi X^V=0.
\ee
By (\ref{o8}),  we get
\be\label{o9}
G(\phi R(X^V, \phi X^V)\phi X^V, \phi X^V)+\frac{1}{4}G(\phi X^V, \phi X^V)G(\phi X^V, \phi X^V)=0.
\ee
Since $\eta^V(X^V)=0$, then (\ref{o9}) gives us
\[
G(R(X^V, \phi X^V)\phi X^V, X^V)=\frac{1}{4}G^2(\phi X^V, \phi X^V).
\]
Therefore we obtain
\[
K(X^V, \phi X^V)=\frac{G(R(X^V, \phi X^V)\phi X^V, X^V)}{G(X^V, X^V)G(\phi X^V, \phi X^V)}=-\frac{1}{4}.
\]
It means that $E$ has vertical $\phi$-flag curvature $-\frac{1}{4}$.
\end{proof}

\subsection{Horizontal and Vertical Ricci Tensors}

The \textit{horizontal Ricci tensor} $S^H$ of an $(2(k_1+k_2)+2)$-dimensional para-Sasakian Finsler manifold $E$ is given by
\begin{align}
S^H(X^H, Y^H)&=\sum_{i=1}^{2k_1}G(R(X^H, E_i^H)E_i^H, Y^H)+G(R(X^H, \xi^H)\xi^H, Y^H)\nonumber\\
&=\sum_{i=1}^{2k_1}G(R(E_i^H, X^H)Y^H, E_i^H)+G(R(\xi^H, X^H)Y^H, \xi^H),\label{salar7}
\end{align}
where $\{E_1^H, E_2^H,\ldots, E_{2k_1}^H, \xi^H\}$ is a local orthonormal frame of $H_uE$. Similarly, the \textit{vertical Ricci tensor} of an $(2(k_1+k_2)+2)$-dimensional para-Sasakian Finsler manifold $E$ is given by
\begin{align}
S^V(X^V, Y^V)&=\sum_{i=1}^{2k_2}G(R(X^V, E_i^V)E_i^V, Y^V)+G(R(X^V, \xi^V)\xi^V, Y^V)\nonumber\\
&=\sum_{i=1}^{2k_2}G(R(E_i^V, X^V)Y^V, E_i^V)+G(R(\xi^V, X^V)Y^V, \xi^V),\label{salar8}
\end{align}
where $\{E_1^V, E_2^V,\ldots, E_{2k_2}^V, \xi^V\}$ is a local orthonormal frame of $V_uE$.

\begin{proposition}
The  horizontal and vertical Ricci tensors $S^H$ and $S^V$ of a $(2(k_1+k_2)+2)$-dimensional para-Sasakian Finsler manifold satisfies the following
equations:
\begin{equation}\label{salar3}
\left\{
\begin{array}{cc}
(i)\ S^H(X^H, \xi^H)=-\frac{k_1}{2}\eta^H(X^H)-G(D^V_{[E_i^H, X^H]}\xi^H, E_i^H),&\\
\hspace{-3.7cm}(ii)\ S^V(X^V, \xi^V)=-\frac{k_2}{2}\eta^V(X^V),&\\
\hspace{-1.5cm}(iii)\ S^H(\xi^H, \xi^H)=-\frac{k_1}{2}-G(D^V_{[E_i^H, \xi^H]}\xi^H, E_i^H),&\\
\hspace{-4.8cm}(iv)\ S^V(\xi^V, \xi^V)=-\frac{k_2}{2}.
\end{array}
\right.
\end{equation}
\end{proposition}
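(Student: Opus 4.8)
The plan is to specialize the curvature identities \eqref{salar1} and \eqref{salar2} (together with the first Bianchi-type symmetries packaged in \eqref{salar7}--\eqref{salar8}) to the vector field $\xi$, and then trace. For part (ii), I would start from the second expression for $S^V$ in \eqref{salar8} with $Y^V=\xi^V$ and use the symmetry $G(R(E_i^V,X^V)\xi^V,E_i^V)=G(R(\xi^V,E_i^V)E_i^V,X^V)$ so that the sum becomes $\sum_i G(R(E_i^V,\xi^V)E_i^V,X^V)$; by \eqref{salar1} applied with the roles of the arguments arranged appropriately (equivalently, by \eqref{babi} of Lemma \ref{4.7}), $R(E_i^V,\xi^V)\xi^V=-\tfrac14(E_i^V-\eta^V(E_i^V)\xi^V)$, but here I need $R(X^V,E_i^V)E_i^V$ type terms, so the cleanest route is to use \eqref{salar1}: $R(X^V,E_i^V)\xi^V=\tfrac14\{\eta^V(X^V)E_i^V-\eta^V(E_i^V)X^V\}$, then pair with $E_i^V$ and sum. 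Summing over the orthonormal frame $\{E_1^V,\dots,E_{2k_2}^V,\xi^V\}$ and using $\eta^V(E_i^V)=0$ for $i\le 2k_2$, $\eta^V(\xi^V)=1$, the diagonal terms collapse to $-\tfrac{k_2}{2}\eta^V(X^V)$ after accounting for the $\xi^V$ term separately via \eqref{babi}. Part (iv) is then the special case $X^V=\xi^V$, giving $-\tfrac{k_2}{2}$.

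For parts (i) and (iii), the computation is identical in structure but one must carry along the extra term $-D^V_{[X^H,Y^H]}\xi^H$ that appears in \eqref{salar2} and in \eqref{babi1}. Concretely, I would write $S^H(X^H,\xi^H)=\sum_{i=1}^{2k_1}G(R(E_i^H,X^H)\xi^H,E_i^H)+G(R(\xi^H,X^H)\xi^H,\xi^H)$, convert $R(E_i^H,X^H)\xi^H$ via \eqref{salar2} (with $X^H\mapsto E_i^H$, $Y^H\mapsto X^H$), obtaining $\tfrac14\{\eta^H(E_i^H)X^H-\eta^H(X^H)E_i^H\}-D^V_{[E_i^H,X^H]}\xi^H$, and pair with $E_i^H$. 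The Kronecker-type terms sum to $-\tfrac{k_1}{4}\eta^H(X^H)\cdot$(something) — I would need to be careful that summing $\tfrac14(\eta^H(E_i^H)G(X^H,E_i^H)-\eta^H(X^H)G(E_i^H,E_i^H))$ over $i=1,\dots,2k_1$ gives $-\tfrac{k_1}{2}\eta^H(X^H)$ once the $\xi^H$-slot contribution from Lemma \ref{4.7} \eqref{babi1} is added in; the residual term is exactly $-\sum_i G(D^V_{[E_i^H,X^H]}\xi^H,E_i^H)$, which is what appears on the right of (i). The term $G(R(\xi^H,X^H)\xi^H,\xi^H)$ vanishes because $R(\xi^H,X^H)\xi^H\in HE$ is $G^H$-orthogonal to... actually one sees from \eqref{salar2} with $Y^H=\xi^H$ that this term only contributes through the Kronecker part (which is skew and kills the $\xi^H,\xi^H$ pairing) plus a $D^V$-term along $[\xi^H,X^H]$, and pairing an element with $\xi^H$ — I would check that $G(D^V_{[\xi^H,X^H]}\xi^H,\xi^H)=0$ since $D^V$ preserves the vertical distribution while $\xi^H$ is horizontal, so this term is genuinely zero. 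Part (iii) is the substitution $X^H=\xi^H$ in (i), using $\eta^H(\xi^H)=1$.

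The main obstacle I anticipate is purely bookkeeping: making sure the orthonormal-frame sum of the Kronecker-type expression $\tfrac14\{\eta(E_i)\langle X,E_i\rangle-\eta(X)\langle E_i,E_i\rangle\}$ produces the stated coefficient $-\tfrac{k_2}{2}$ (resp. $-\tfrac{k_1}{2}$) rather than $-\tfrac{2k_2}{4}=-\tfrac{k_2}{2}$ with a sign slip, and correctly isolating which frame vector sits in which slot of $R$ so that one invokes \eqref{salar1}/\eqref{salar2} with the right argument order (the curvature tensor's antisymmetry in its first two slots must be used consistently). There is also a subtlety in whether the $\xi$-slot of the local frame should be treated by \eqref{babi}/\eqref{babi1} or by the Kronecker formula; reconciling these two gives the clean split between the $-\tfrac{k}{2}$ term and the leftover $D^V$-term, and that reconciliation is the one place where a careful sign check is essential. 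Everything else is a direct substitution, so I would present it as: state the frame, apply \eqref{salar2} (resp. \eqref{salar1}), sum, simplify using $\eta^H(E_i^H)=0$ and $G^H(E_i^H,E_j^H)=\delta_{ij}$, and read off the four identities.
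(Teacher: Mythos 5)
Your proposal is essentially the paper's own proof: substitute the curvature identities \eqref{salar1} and \eqref{salar2} (and Lemma \ref{4.7} for the $\xi$-slot) into the orthonormal-frame sums \eqref{salar7}, \eqref{salar8} with the second argument equal to $\xi^H$ (resp.\ $\xi^V$), simplify with $\eta(E_i)=0$ and $G(E_i,E_i)=1$, and then set $X=\xi$ to get (iii) and (iv). The one micro-step you justify incorrectly is the vanishing of $G(D^V_{[\xi^H,X^H]}\xi^H,\xi^H)$: since $D$ is a d-connection it preserves the horizontal distribution, so $D^V_Z\xi^H$ is \emph{horizontal} and no orthogonality-of-distributions argument applies; the term vanishes instead because $D$ is metrical and $G(\xi^H,\xi^H)=\eta^H(\xi^H)=1$ is constant, so $2G(D_Z\xi^H,\xi^H)=Z\bigl(G(\xi^H,\xi^H)\bigr)=0$ (the paper's proof simply drops this term without comment, so your attention to it is a genuine improvement once the reason is fixed).
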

\begin{proof}
Using (\ref{salar2}) and (\ref{salar7}), one can obtain the following:
\begin{eqnarray}\label{o10}
\nonumber S^H(X^H\!\!\!\!\!\!\!\!\!\!&,&\!\!\!\!\!\!\!\!\ \ \xi^H)= \sum_{i=1}^{2k_1}G\big(R(E_i^H, X^H)\xi^H,\ \ E_i^H\big)\\
\!\!\!\!&=&\!\!\!\! \sum_{i=1}^{2k_1}G\Big(\frac{1}{4}\eta^H(E_i^H)X^H-\frac{1}{4}\eta^H(X^H)E_i^H-D^V_{[E_i^H, X^H]}\xi^H,\  E_i^H\Big).
\end{eqnarray}
Since $E_i^H$ is orthogonal to $\xi^H$, then we have $\eta^H(E_i^H)=G(E_i^H, \xi^H)=0$. By (\ref{o10}) and  $G(E_i^H, E_i^H)=1$, we get the part (i) of (\ref{salar3}). Plugging $X^H=\xi^H$ in (i) and using $\eta^H(X^H)=1$ implies  (iii). Similarly, (\ref{salar1}) and (\ref{salar8}) give us
\begin{align}
\nonumber S^V(X^V, \xi^V)&=\sum_{i=1}^{2k_2}G\big(R(E_i^V, X^V)\xi^V,\ E_i^V\big)\\
\nonumber &=\frac{1}{4}\sum_{i=1}^{2k_2}G\Big(\eta^V(E_i^V)X^V-\eta^V(X^V)E_i^V,\ E_i^V\Big)\\
&=-\frac{1}{2}k_2\eta^V(X^V).\label{o11}
\end{align}
By setting $X^V=\xi^V$ in (\ref{o11}), we get (iv).
\end{proof}

\bigskip

According to the parts (i) and (iii) of (\ref{salar3}), one can deduces the following easily.
\begin{cor}
For a $(2(k_1+k_2)+2)$-dimensional para-Sasakian Finsler manifold, the following hold
\begin{description}
  \item[i)] $S^H(X^H, \xi^H)=-\frac{k_1}{2}\eta^H(X^H)$ is equivalent to vanishing of $G(D^V_{[E_i^H, X^H]}\xi^H, E_i^H)$;
  \item[ii)] $S^H(\xi^H, \xi^H)=-\frac{k_1}{2}$ is equivalent to vanishing of $G(D^V_{[E_i^H, \xi^H]}\xi^H, E_i^H)$.
\end{description}
\end{cor}

\bigskip

Using Lemma \ref{4.7}, we have the following proposition.
\begin{proposition}\label{propend}
The  horizontal and vertical Ricci tensors $S^H$ and $S^V$ of a $(2(k_1+k_2)+2)$-dimensional K-paracontact Finsler vector bundle satisfies the following equations:
\[
S^H(\xi^H, \xi^H)=-\frac{1}{2}k_1-G(D^V_{[E_i^H, \xi^H]}\xi^H, E_i^H),\ \ \ S^V(\xi^V, \xi^V)=-\frac{1}{2}k_2.
\]
\end{proposition}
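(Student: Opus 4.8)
The plan is to specialize the curvature identities already established in Lemma~\ref{4.7} and take the appropriate trace. Since a $K$-paracontact Finsler structure need not be para-Sasakian, I cannot use the Ricci formulas of the preceding proposition directly; instead I will recompute $S^H(\xi^H,\xi^H)$ and $S^V(\xi^V,\xi^V)$ from the definitions \eqref{salar7} and \eqref{salar8}, substituting \eqref{babi} and \eqref{babi1} in place of \eqref{salar1}--\eqref{salar2}.

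First I would treat the vertical case. Evaluating \eqref{salar8} at $X^V=Y^V=\xi^V$ gives
\[
S^V(\xi^V,\xi^V)=\sum_{i=1}^{2k_2} G\big(R(\xi^V,E_i^V)E_i^V,\xi^V\big)+G\big(R(\xi^V,\xi^V)\xi^V,\xi^V\big),
\]
and the last term vanishes by skew-symmetry of $R$ in its first two slots. Using the symmetry $G(R(\xi^V,E_i^V)E_i^V,\xi^V)=G(R(E_i^V,\xi^V)\xi^V,E_i^V)$ together with \eqref{babi}, each summand equals $-\tfrac14\big(G(E_i^V,E_i^V)-\eta^V(E_i^V)^2\big)=-\tfrac14$ because $E_i^V\perp\xi^V$ implies $\eta^V(E_i^V)=0$ and $G(E_i^V,E_i^V)=1$. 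Summing over $i=1,\dots,2k_2$ yields $S^V(\xi^V,\xi^V)=-\tfrac12 k_2$, as claimed.

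The horizontal case is identical in structure but carries the extra term from \eqref{babi1}. Substituting $X^H=\xi^H$ into \eqref{babi1}, and then using the symmetry of $G$ together with the definition \eqref{salar7} at $X^H=Y^H=\xi^H$, I get
\[
S^H(\xi^H,\xi^H)=\sum_{i=1}^{2k_1}\Big[-\tfrac14\big(G(E_i^H,E_i^H)-\eta^H(E_i^H)^2\big)-G\big(D^V_{[E_i^H,\xi^H]}\xi^H,E_i^H\big)\Big].
\]
Since $E_i^H\perp\xi^H$ we again have $\eta^H(E_i^H)=0$ and $G(E_i^H,E_i^H)=1$, so the first part of the sum contributes $-\tfrac12 k_1$ and the remainder is the stated correction term $-G(D^V_{[E_i^H,\xi^H]}\xi^H,E_i^H)$ (with the Einstein summation convention, matching the notation already used in \eqref{salar3}). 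I expect no real obstacle here: the only point requiring a little care is justifying the index-symmetry of the curvature operator used to pass from $R(\xi,E_i)E_i$ to $R(E_i,\xi)\xi$ on each subbundle — but this is exactly the symmetry implicit in the second equality of \eqref{salar7}--\eqref{salar8}, so it is already available. One also notes that this proposition is simply the pair (iii)--(iv) of \eqref{salar3} re-derived without the normality hypothesis, which is why Lemma~\ref{4.7} rather than the para-Sasakian curvature formulas is the right input.
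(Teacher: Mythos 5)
Your proposal is correct and follows essentially the same route as the paper: the paper's proof is precisely ``use Lemma~\ref{4.7}'', i.e.\ insert \eqref{babi} and \eqref{babi1} into the definitions \eqref{salar7}--\eqref{salar8} with an orthonormal frame and use $\eta(E_i)=0$, $G(E_i,E_i)=1$, which is exactly your computation. Your remarks that the needed index symmetry is already built into the second equality of \eqref{salar7}--\eqref{salar8}, and that Lemma~\ref{4.7} (rather than the para-Sasakian formulas \eqref{salar1}--\eqref{salar2}) is the correct input since K-paracontact need not be para-Sasakian, are both accurate.
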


\smallskip

Proposition \ref{propend},   have an easy  consequence as follows.
\begin{cor}
For a $(2(k_1+k_2)+2)$-dimensional K-paracontact Finsler vector bundle $E$, $S^H(\xi^H, \xi^H)=-\frac{k_1}{2}$ is equivalent to vanishing of $G(D^V_{[E_i^H, \xi^H]}\xi^H, E_i^H)$.
\end{cor}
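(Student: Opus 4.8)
The plan is to specialize the general Ricci identities for a para-Sasakian Finsler structure — in particular part (iii) of Proposition stated via \eqref{salar3} — and observe that for a $K$-paracontact Finsler vector bundle the same computation goes through, since Corollary \ref{kpara1} tells us every para-Sasakian structure is already $K$-paracontact, but more importantly Lemma \ref{4.7} gives the two curvature identities \eqref{babi} and \eqref{babi1} directly under only the $K$-paracontact hypothesis. So the whole statement reduces to contracting those two formulas against a local orthonormal frame.

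First I would fix a local orthonormal frame $\{E_1^V,\dots,E_{2k_2}^V,\xi^V\}$ of $V_uE$ and apply the definition \eqref{salar8} of $S^V$ with $X^V=Y^V=\xi^V$. The sum term becomes $\sum_{i=1}^{2k_2} G(R(\xi^V,E_i^V)E_i^V,\xi^V)$; by the symmetry of the curvature operator this equals $\sum_i G(R(E_i^V,\xi^V)\xi^V,E_i^V)$, and now substitute \eqref{babi} from Lemma \ref{4.7}: $R(E_i^V,\xi^V)\xi^V=-\tfrac14(E_i^V-\eta^V(E_i^V)\xi^V)=-\tfrac14 E_i^V$ since $E_i^V\perp\xi^V$. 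Pairing with $E_i^V$ and using $G(E_i^V,E_i^V)=1$ gives $-\tfrac14$ per term, hence $-\tfrac{2k_2}{4}=-\tfrac{k_2}{2}$; the extra term $G(R(\xi^V,\xi^V)\xi^V,\xi^V)$ in \eqref{salar8} vanishes by antisymmetry. This yields $S^V(\xi^V,\xi^V)=-\tfrac12 k_2$.

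Next I would do the horizontal case the same way, using \eqref{salar7} with $X^H=Y^H=\xi^H$ and a local orthonormal frame $\{E_1^H,\dots,E_{2k_1}^H,\xi^H\}$ of $H_uE$. The relevant sum is $\sum_{i=1}^{2k_1} G(R(E_i^H,\xi^H)\xi^H,E_i^H)$, and now substitute \eqref{babi1}: $R(E_i^H,\xi^H)\xi^H=-\tfrac14(E_i^H-\eta^H(E_i^H)\xi^H)-D^V_{[E_i^H,\xi^H]}\xi^H = -\tfrac14 E_i^H - D^V_{[E_i^H,\xi^H]}\xi^H$ since $E_i^H\perp\xi^H$. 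Pairing with $E_i^H$, summing, and again discarding the $R(\xi^H,\xi^H)$-term, I get $S^H(\xi^H,\xi^H)=-\tfrac14\sum_{i=1}^{2k_1}1 - \sum_{i=1}^{2k_1}G(D^V_{[E_i^H,\xi^H]}\xi^H,E_i^H) = -\tfrac12 k_1 - G(D^V_{[E_i^H,\xi^H]}\xi^H,E_i^H)$, where in the last expression the summation over $i$ is kept implicit in the notation exactly as in \eqref{salar3}. This is precisely the claimed identity.

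The only subtlety — and the one place I would be careful — is justifying that these contractions are independent of the chosen orthonormal frame, i.e. that $S^H$ and $S^V$ as written in \eqref{salar7}, \eqref{salar8} are genuinely well-defined $d$-tensors; but this is already implicit in the preceding Proposition (the para-Sasakian version), and since Lemma \ref{4.7} needs only the $K$-paracontact hypothesis, the extension from para-Sasakian to $K$-paracontact is immediate. No genuine obstacle remains beyond bookkeeping with the $\eta^H(E_i^H)=\eta^V(E_i^V)=0$ orthogonality relations.
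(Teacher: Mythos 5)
Your proposal is correct and follows essentially the same route as the paper: you contract the K-paracontact curvature identities of Lemma \ref{4.7} against an orthonormal frame to recover the identity $S^H(\xi^H,\xi^H)=-\tfrac{k_1}{2}-G(D^V_{[E_i^H,\xi^H]}\xi^H,E_i^H)$ of Proposition \ref{propend}, from which the stated equivalence is immediate. The only cosmetic difference is that you invoke a symmetry of the curvature operator in the vertical case, whereas the paper's definitions (\ref{salar7})--(\ref{salar8}) already provide the second expression directly, so no extra symmetry argument is needed.
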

%----------------------------------------------------------------------------------------

\bigskip

\noindent
Esmaeil Peyghan and Esa Sharahi\\
Department of Mathematics, Faculty  of Science\\
Arak University\\
Arak 38156-8-8349,  Iran\\
Email: epeyghan@gmail.com
\bigskip

\noindent
Akbar Tayebi\\
Department of Mathematics, Faculty  of Science\\
University of Qom \\
Qom. Iran\\
Email:\ akbar.tayebi@gmail.com

\end{document}